\def\NZQ{\mathbb}               
\def\ZZ{{\NZQ Z}}
\def\RR{{\NZQ R}}
\def\CC{{\NZQ C}}
\def\FF{{\NZQ F}}
\def\frk{\mathfrak}               
\def\Phi{{\frk N}}
\def\opn#1#2{\def#1{\operatorname{#2}}} 
\opn\con{conv} \opn\relint{relint} \opn\vol{vol} \opn\heit{ht} \opn\supp{supp}
\opn\gr{gr}
\def\lfr{\left\lfloor}
\def\rfr{\right\rfloor}
\newtheorem{Theorem}{Theorem}[section]
\newtheorem{Lemma}[Theorem]{Lemma}
\newtheorem{Corollary}[Theorem]{Corollary}
\newtheorem{Proposition}[Theorem]{Proposition}
\newtheorem{Example}[Theorem]{Example}
\newtheorem{Definition}[Theorem]{Definition}
\newtheorem{Conjecture}[Theorem]{Conjecture}
\newtheorem*{Ack}{Acknowledgment}
\begin{document}

\title{Gorenstein polytopes with trinomial $h^*$-polynomials}

\author{Akihiro Higashitani, Benjamin Nill and Akiyoshi Tsuchiya}
\thanks{
{\bf 2010 Mathematics Subject Classification:}
Primary 52B20; Secondary 52B12. \\
\;\;\;\; {\bf Keywords:}
Integral polytope, lattice polytope, Ehrhart polynomial, $h^*$-vector, Gorenstein polytopes, empty simplices. 
}
\address{Akihiro Higashitani,
Department of Mathematics, Graduate School of Science, 
Kyoto University, Kitashirakawa-Oiwake cho, Sakyo-ku, Kyoto 606-8502, Japan}
\email{ahigashi@math.kyoto-u.ac.jp}
\address{Benjamin Nill, 
Department of Mathematics, 
Stockholm University, Kr\"aftriket, SE-10691, Stockholm, Sweden}
\email{nill@math.su.se}
\address{Akiyoshi Tsuchiya,
Department of Pure and Applied Mathematics,
Graduate School of Information Science and Technology,
Osaka University,
Toyonaka, Osaka 560-0043, Japan}
\email{a-tsuchiya@cr.math.sci.osaka-u.ac.jp}

\begin{abstract}
The characterization of lattice polytopes based upon information about their Ehrhart $h^*$-polynomials is a difficult open problem. In this paper, we finish the classification of lattice polytopes whose $h^*$-polynomials satisfy two properties: they are palindromic (so the polytope is Gorenstein) and they consist of precisely three terms. This extends the classification of Gorenstein polytopes of degree two due to Batyrev and Juny. The proof relies on the recent characterization of Batyrev and Hofscheier of empty lattice simplices whose $h^*$-polynomials have precisely two terms. Putting our theorem in perspective, we give a summary of these and other existing results in this area.
\end{abstract}

\maketitle

\section{Introduction}

\subsection{Basic notions and terminology}

Let us start by setting up notation and recalling the main objects. For an introduction to Ehrhart theory, we refer to \cite{BeckRobins}.

Let $\Delta \subset \RR^d$ be a {\em lattice polytope} of dimension $d$ (i.e., $\Delta$ is a full-dimensional convex polytope in $\RR^d$ with vertices in $\ZZ^d$). Throughout the paper, lattice polytopes are identified if they are {\em isomorphic} via an affine lattice-preserving transformation. The {\em Ehrhart series} of $\Delta$ 
$$\text{Ehr}_\Delta(t)=1+\sum_{k = 1}^\infty \sharp (k \Delta \cap \ZZ^d) t^k$$ 
is a rational function of the form 
$$\text{Ehr}_\Delta(t)=\frac{h_0^*+h_1^*t+\cdots+h_d^*t^d}{(1-t)^{d+1}},$$ 
where the polynomial $h_0^*+h_1^*t+\cdots+h_d^*t^d$ appearing in the numerator has nonnegative integer coefficients. 
We call the polynomial $h^*_\Delta(t)=h_0^*+h_1^*t+\cdots+h_d^*t^d$ the {\em $h^*$-polynomial} of $\Delta$. 
The positive integer $\sum_{i=0}^d h_i^*$ is the {\em normalized volume} of $\Delta$, denoted by $\vol(\Delta)$; it is equal to $d!$ times 
the usual Euclidean volume of $\Delta$. Note that there are only finitely many lattice polytopes of fixed normalized volume \cite{LZ91}. The highest possible coefficient $h^*_d$ equals the number of interior lattice points of $\Delta$. The maximal integer $s$ such that $h^*_s \not=0$ is called the {\em degree} of $\Delta$. We also have the equality $h^*_1 =  \sharp (\Delta \cap \ZZ) - d - 1$. A lattice simplex $\Delta$ is an {\em empty simplex} if 
$\Delta$ contains no lattice points except for its vertices, equivalently, $h^*_1=0$. Empty simplices appear naturally in singularity theory \cite{BBBK11} and optimization \cite{Seb99}.

There are two well-known higher-dimensional constructions of lattice polytopes. Let $\con(S)$ denote the convex hull of a subset $S \subset \RR^d$. 
For a lattice polytope $\Delta \subset \RR^d$, we can construct a new lattice polytope 
$$\Delta'=\con(\Delta \times \{0\}, (0,\ldots,0,1)) \subset \RR^{d+1}$$ 
of dimension $d+1$. This polytope $\Delta'$ is called the {\em lattice pyramid} over $\Delta$. We often use lattice pyramid shortly for a lattice polytope that has been obtained by successively taking lattice pyramids. Note that the $h^*$-polynomial does not change under lattice pyramids \cite{Bat06}. 
We also define a lattice polytope $\Delta$ to be a {\em Cayley polytope} of $\Delta_1, \ldots, \Delta_n \subset \RR^t$, if $\Delta$ is isomorphic to 
\[\con(e_1 \times \Delta_1, \ldots, e_n \times \Delta_n) \subset \RR^n \times \RR^t,\]
where $e_1, \ldots, e_n$ is the standard lattice basis of $\RR^n$, see e.g. \cite{BN07}.

\subsection{Background}

One of the directions in Ehrhart theory is to characterize $h^*$-poly\-nomials that have an especially simple form and to classify all lattice polytopes with these $h^*$-polynomials. The motivation is that such results 
help to understand the restrictions that this important invariant imposes on the structure of a lattice polytope and to learn what to expect in more general situations. 
In order to put our main theorem in this paper in perspective, we will present some of the existing results in this area. \medskip


\begin{enumerate}
\item {\em Small dimensions}: Let us describe what is known about $h^*$-polynomials of small-dimensional lattice polytopes. In dimension $d=1$, for a given lattice interval of length $a+1$, we have $h^*(t)=1 + a t$. In dimension $2$, the $h^*$-polynomials of lattice polygons have been classified by Scott \cite{Scott}. It holds that $1+at +bt^2$ with $a,b \in \ZZ_{\ge0}$ is the $h^*$-polynomial of a lattice polygon if and only if 
\begin{itemize}
\item $b=0$ (i.e., $\Delta$ has no interior lattice points), or
\item $b=1$ and $a=7$ (here, $\Delta$ is isomorphic to $\con((0,0),(3,0),(0,3))$), or
\item $b \ge 1$ and $b \le a \le 3b+3$.
\end{itemize}
The upper bound in the last point is often refered to as Scott's theorem. We refer to \cite{Haase} for a thorough discussion. 

\smallskip

In dimension $3$ there are currently only partial results known. The arguably most significant one is White's theorem \cite{White}: a three-dimensional lattice simplex is empty (i.e., $h^*_1=0$) 
if and only if it is the Cayley polytope of two empty line segments in $\RR^2$. Recently, all three-dimensional lattice polytopes with at most $6$ lattice points (i.e., $h^*_1\le 2$) have been classified \cite{Santos1,Santos2}.\medskip

\item {\em Small degree}: It is natural to take the degree of the $h^*$-polynomial as a measure of complexity. Any degree zero lattice polytope is a unimodular simplex (i.e., the convex 
hull of affine lattice basis). For degree one, taking lattice pyramids over lattice intervals yields that 
any $h^*$-polynomial $1+at$ with arbitrary $a \in \ZZ_{\ge 0}$ is possible. Lattice polytopes of degree one are completely classified \cite{BN07}:
\begin{itemize}
\item Lattice pyramids over $\con((0,0),(2,0),(0,2))$, or 
\item Cayley polytopes of line intervals in $\RR^1$.
\end{itemize}

\smallskip

Lattice polytopes of degree at most two are not yet completely classified. However, their $h^*$-polynomials are known \cite{Treutlein,Henk}. A polynomial $1+at+bt^2$ with $a,b \in \ZZ_{\ge0}$ is the $h^*$-polynomial of a lattice polytope (in some dimension) if and only if 
\begin{itemize}
\item $b=0$, or 
\item $b=1$ and $a=7$\\
(here, $\Delta$ is isomorphic to a lattice pyramid over $\con((0,0),(3,0),(0,3))$), or
\item $b \ge 1$ and $a \le 3b+3$.
\end{itemize}
Note how close this is to the characterization in dimension two above. It follows from the proof in \cite{Henk} that any such polynomial can be given by the $h^*$-polynomial of a lattice polytope in dimension three.\medskip

\item {\em Small number of monomials}: An even more general problem is to consider the number of terms in the $h^*$-polynomial. Batyrev and Hofscheier \cite{BatyrevHof1,BatyrevHof2} have recently classified 
all lattice polytopes whose $h^*$-polynomials are binomials, i.e., of the form $h^*(t)=1+a t^k$. Let $\Delta$ be a $d$-dimensional lattice polytopes with such a binomial $h^*$-polynomial. Since the degree one case $k=1$ is known, let $k \ge 2$. Hence, $h^*_1=0$ implies that $\Delta$ is an empty simplex. It can be observed \cite[Prop.1.5]{BatyrevHof2} that $d \ge 2k-1$. Let $d=2k-1$. In this case, it is proven in \cite{BatyrevHof1} that $\Delta$ has $h^*$-polynomial $1 + a t^k$ (with $a \ge 1$) if and only if $\Delta$ is a Cayley polytope of $k$ empty line segments in $\RR^k$. Note that for $d=3$ and $k=2$ this recovers White's theorem. In particular, one sees from \cite[Example 2.2]{BatyrevHof2} that any $a \in \ZZ_{\ge 1}$ and $k \in \ZZ_{\ge 1}$ is possible for an $h^*$-polynomial of the form $1+at^k$. The reader might notice the analogy with the degree one case above.

\smallskip

For $d \ge 2k$, we are in an exceptional situation. Let us consider only $h^*$-polynomials of lattice polytopes that are not lattice pyramids (otherwise, by what we've just seen, any $1+at^k$ can appear). Note that since $\Delta$ is a simplex, it follows from \cite{Nil08} that $d \le 4k-2$. Now, the following characterization can be deduced from the results in \cite{BatyrevHof2}: $1 + a t^k$ (with $a \in \ZZ_{\ge 1}$) is the $h^*$-polynomial of a $d$-dimensional lattice polytope $\Delta$ 
with $d \ge 2k$ where $\Delta$ is not a lattice pyramid if and only if 

\centerline{$a = \frac{2kp}{d+1-p(d+1-2k)} - 1$ and $\frac{2k}{d+1-p(d+1-2k)}$ is a power of a prime $p$.}

It is not hard to see that this implies $p \le k$, in particular, $\vol(\Delta) = a+1 < 2k^2$. 
Hence, there are only finitely many non-lattice-pyramid lattice polytopes with binomial $h^*$-polynomials for given $k$ and arbitrary $d \ge 2k$. They are completely classified by Batyrev and Hofscheier \cite{BatyrevHof2}. It turns out that they are uniquely determined by their $h^*$-polynomial. As their results are the key ingredients in our proof we will describe them in more detail below (see \ref{batyrevhofsection}). \medskip

\item {\em Palindromic $h^*$-polynomials:} A polynomial $\sum_{i=0}^s a_i x^i$ (with $a_s \not=0$) is {\em palindromic}, if $a_i = a_{s-i}$ for $i=0, \ldots, s$. A lattice polytope $\Delta$ is called 
{\em Gorenstein}, if it has palindromic $h^*$-polynomial. Equivalently, the semigroup algebra associated to the cone over $\Delta$ is a Gorenstein algebra. Gorenstein polytopes are of interest in combinatorial commutative algebra, mirror symmetry, and tropical geometry (we refer to \cite{BatyrevJuny,BN08,Joswig}). 
In each dimension, there exist only finitely many Gorenstein polytopes. Any Gorenstein polytope has a dilate that is a reflexive polytope (in the sense of Batyrev \cite{Bat94}). They are known up to dimension $4$ \cite{KS00}. For fixed degree, there exist only finitely many Gorenstein polytopes 
that are not lattice pyramids \cite{HNP09}. They have been completely classified by Batyrev and Juny up to degree two \cite{BatyrevJuny}. In particular, their results imply that a polynomial $1+(m-2)t+t^2$ with $m \in \ZZ_{\ge 2}$ is the $h^*$-polynomial of a $d$-dimensional lattice polytope that is not a lattice pyramid if and only if 
\begin{itemize}
\item $d=2$ and $3 \le m \le 9$, or
\item $d=3$ and $2 \le m \le 8$, or
\item $d=4$ and $3 \le m \le 6$, or 
\item $d=5$ and $m=4$.
\end{itemize}
\end{enumerate}

Finally, let us mention that there also exist classification results of lattice polytopes of small normalized volume \cite{SmallVol}, of lattice polytopes with prime normalized volume \cite{PrimeVol}, as well as of lattice polytopes that have so-called shifted symmetric $h^*$-polynomials \cite{ShiftedSym}. 

\subsection{Classification of palindromic $h^*$-trinomials}

The main result (Theorem~\ref{main}) of this paper finishes the complete classification of all lattice polytopes that are not lattice pyramids and whose $h^*$-polynomial is palindromic and has precisely three terms. In the case of degree two, this was already done by Batyrev and Juny \cite{BatyrevJuny}. Here, we only consider the case when the degree is strictly larger than two. In this situation, the lattice polytope is necessarily an empty simplex, and we can apply methods and results of Batyrev and Hofscheier \cite{BatyrevHof1,BatyrevHof2}. Since the precise formulation of Theorem~\ref{main} needs some more notation, let us describe here only two immediate consequences. First, the complete characterization of palindromic $h^*$-trinomials:

\begin{Corollary}\label{coro}
Let $d \geq 2$, $m \geq 2$ and $k \geq 1$ be integers. 
The polynomial $1+(m-2)t^k + t^{2k}$ is the $h^*$-polynomial of 
a lattice polytope of dimension $d$ if and only if the integers $k,m,d$ satisfy one of the following conditions: 
\begin{itemize}
\item $k=1$, $3 \leq m \leq 9$ and $d = 2$;
\item $k=1$, $2 \leq m \leq 9$ and $d \geq 3$; 
\item $k \geq 2$, $m \in \{3,4,6,8\}$ and $d \geq 3k-1$; 
\item $k=2^{\ell-3}a$, $m = 2^\ell$ and $d \geq 4k-1$, where $a \geq 1$ and $\ell \geq 4$; 
\item $k=3^{\ell-2}a$, $m = 3^\ell$ and $d \geq 3k-1$, where $a \geq 1$ and $\ell \geq 3$. 
\end{itemize}
\end{Corollary}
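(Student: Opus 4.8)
The plan is to deduce the corollary from the main classification (Theorem~\ref{main}) together with the classical description of degree-two $h^*$-polynomials, splitting according to whether $k=1$ or $k\ge 2$. The key structural observation is that for $k\ge 2$ the polynomial has $h_1^*=0$, so the relation $h_1^*=\sharp(\Delta\cap\ZZ^d)-d-1$ forces $\Delta$ to contain exactly $d+1$ lattice points and hence to be an empty simplex; this is precisely the regime governed by the empty-simplex methods of Batyrev--Hofscheier \cite{BatyrevHof1,BatyrevHof2}. For $k=1$ the polynomial has degree at most two and we are in the classically understood range. Throughout I use that taking lattice pyramids preserves the $h^*$-polynomial and raises the dimension by one, so that realizability of a fixed $h^*$-polynomial in dimension $d$ is equivalent to the existence of a non-pyramid realization in some dimension $d_0\le d$; the stated inequalities $d\ge 3k-1$ and $d\ge 4k-1$ therefore record minimal non-pyramid dimensions.

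For $k=1$ the polynomial is $1+(m-2)t+t^2$, a palindromic degree-two polynomial with $h_2^*=1$. In dimension $d=2$ I invoke Scott's theorem in the form recalled above: with $b=1$ the admissible range is $1\le m-2\le 6$ together with the exceptional value $m-2=7$, i.e.\ $3\le m\le 9$; the lower bound $m\ge 3$ reflects that a lattice triangle with an interior point ($h_2^*=1$) has at least four lattice points, so $m=2$ is impossible in the plane. For $d\ge 3$ I use the Treutlein--Henk classification of degree-two $h^*$-polynomials, which gives realizability exactly when $m-2\le 6$ or $m-2=7$, i.e.\ $2\le m\le 9$, with every such value already attained in dimension three; here $m=2$ corresponds to the binomial $1+t^2$, realized by a volume-two empty Gorenstein $3$-simplex (equivalently a Cayley polytope of two empty segments), and larger dimensions follow by pyramiding. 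This yields the first two items.

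For $k\ge 2$ and $m\ge 3$ (the genuine trinomial range; the boundary $m=2$ gives the binomial $1+t^{2k}$, whose realizability is governed directly by Batyrev--Hofscheier) I apply Theorem~\ref{main} to the empty Gorenstein simplex $\Delta$. Following Batyrev--Hofscheier, $\Delta$ is encoded by a finite abelian group $G$ of order $\vol(\Delta)=m$ together with a weight (height) function whose level sets have cardinalities $h_0^*,\ldots,h_{2k}^*$; palindromicity supplies a weight-reversing involution, and the trinomial shape forces every element of $G$ other than the identity and the unique top element to have weight exactly $k$. Matching this weight system against the prime-power structure dictated by the Batyrev--Hofscheier classification of binomial empty simplices produces precisely the admissible volumes: the sporadic values $m\in\{3,4,6,8\}$, which occur for every $k\ge 2$ at minimal dimension $3k-1$, and the two infinite prime-power families $m=2^\ell$ and $m=3^\ell$, whose realizability requires the divisibilities $2^{\ell-3}\mid k$ and $3^{\ell-2}\mid k$ and whose minimal dimensions are $4k-1$ and $3k-1$ respectively. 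Reading off these data from the families listed in Theorem~\ref{main}, and then pyramiding to cover all larger $d$, gives the last three items; conversely, any realization forces its non-pyramid core into one of these families, establishing the ``only if'' direction together with the dimension lower bounds.

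The main obstacle is the $k\ge 2$ case, i.e.\ the content of Theorem~\ref{main} itself. The delicate point is not the reduction to empty simplices but the precise determination of which volumes $m$, and which divisibility relations between $k$ and the relevant prime power, can arise: one must show that the weight function is forced into exactly the configurations coming from the binomial simplices of Batyrev--Hofscheier, rule out all other prime powers (for instance, explaining why $m=9$ does \emph{not} occur for $k\ge 2$ while $m\in\{3,4,6,8\}$ do), and pin down the exact thresholds $3k-1$ versus $4k-1$. I expect the bookkeeping that separates the sporadic list from the two infinite families, and that extracts the congruences $2^{\ell-3}\mid k$ and $3^{\ell-2}\mid k$, to be the most demanding part of the argument.
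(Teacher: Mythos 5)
Your overall route coincides with the paper's: the corollary is obtained there exactly by quoting Scott's theorem (for $k=1$, $d=2$) and the Treutlein--Henk description of degree-two $h^*$-polynomials (for $k=1$, $d\ge 3$), and, for $k\ge 2$, by using $h_1^*=0$ to reduce to empty simplices, invoking Theorem~\ref{main} for the non-pyramid core, and converting the minimal non-pyramid dimensions into the bounds $d\ge 3k-1$, $d\ge 4k-1$ via the pyramid construction. Your $k=1$ discussion and the reduction-to-non-pyramid argument are correct.

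However, your $k\ge 2$ matching contains a step that cannot be carried out, and it is exactly the one you single out as delicate: ``explaining why $m=9$ does \emph{not} occur for $k\ge 2$.'' No such argument exists, because Theorem~\ref{main}(c) allows $\ell=2$ (only the single pair $(a,\ell)=(1,2)$ is excluded): taking $\ell=2$ and $a=k$ produces, for every $k\ge 2$, a non-lattice-pyramid simplex of dimension $3k-1$ with $h^*$-polynomial $1+7t^k+t^{2k}$, i.e.\ $m=9=3^2$. The Example following Theorem~\ref{main} exhibits this concretely for $k=2$: the group of order $9$ generated by $(1/3,2/3,0,1/3,2/3,0)$ and $(1/3,\ldots,1/3)$ gives $h^*=1+7t^2+t^4$ in dimension $5$. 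So the derivation you describe, if carried out against the actual statement of Theorem~\ref{main}, yields the fifth bullet with $\ell\ge 2$, not $\ell\ge 3$; the statement as printed is inconsistent with the theorem (its ``only if'' direction fails at $(k,m,d)=(2,9,5)$) and must be read with $\ell\ge 2$, equivalently with $m=9$ added to the admissible values. Note the asymmetry with the fourth bullet, which legitimately starts at $\ell\ge 4$: there the theorem's bottom case $m=2^3=8$ is absorbed into the sporadic list $\{3,4,6,8\}$ of the third bullet, whereas $m=3^2=9$ is not on that list and so cannot be dropped. By asserting that $m=9$ gets ruled out, your proposal commits itself to proving something false, and this is a genuine gap rather than a presentational issue.
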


The case $k=1$ was already known, as described in (1) and (2) of the previous section. 

Secondly, Theorem~\ref{main} implies the following uniqueness result:

\begin{Corollary}A lattice simplex $\Delta$ that is not a lattice pyramid is uniquely determined by its dimension and its $h^*$-polynomial if it is of the form $1+(m-2)t^k+t^{2k}$ with $k \ge 2$.
\end{Corollary}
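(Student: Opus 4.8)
The plan is to translate the geometric uniqueness into a rigidity statement about a finite-group weight system and then invoke the binomial classification. First I would record that, since $k\ge 2$, the polynomial $1+(m-2)t^k+t^{2k}$ has $h^*_1=0$, so $\Delta$ is an empty simplex of degree $2k$; being palindromic, $\Delta$ is Gorenstein. I then encode $\Delta$ in the standard way: a $d$-dimensional lattice simplex that is not a lattice pyramid corresponds, up to isomorphism, to a finite abelian group $G$ together with a multiset of characters $\chi_0,\dots,\chi_d\in\widehat G=\mathrm{Hom}(G,\QQ/\ZZ)$, all nonzero (this is the non-pyramid condition), generating $\widehat G$ (so that $g\mapsto(\chi_i(g))_i$ is injective), and satisfying $\chi_0+\cdots+\chi_d=0$. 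Writing $\{x\}\in[0,1)$ for the representative of $x\in\QQ/\ZZ$, the weight $w(g)=\sum_{i=0}^d\{\chi_i(g)\}$ is then an integer, $\vol(\Delta)=|G|=m$, and $h^*_j=\#\{g\in G:w(g)=j\}$. Two such data give isomorphic simplices precisely when they agree after relabeling the $\chi_i$ and applying an automorphism of $G$. Thus the Corollary is equivalent to the assertion that, for fixed $d$ and fixed trinomial $h^*$-polynomial, the pair $(G,\{\chi_0,\dots,\chi_d\})$ is unique up to this equivalence.

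Next I would exploit the Gorenstein structure to constrain the data. From the shape of $h^*$, the weight function takes only the three values $0,k,2k$: there is a unique nonidentity element $g^\ast$ with $w(g^\ast)=2k$, and every other nonidentity element has weight exactly $k$. Palindromicity is equivalent to the existence of such a distinguished element $g^\ast$ of full support for which $g\mapsto g^\ast-g$ reverses weights, and the support identity $w(g)+w(-g)=\#\{i:\chi_i(g)\ne 0\}$ then forces each nonidentity element to have a large, rigidly prescribed support. Combined with Corollary~\ref{coro}, which pins down the admissible triples $(k,m,d)$ and, within each family, the derived invariants (the prime $p$ and exponent $\ell$), these constraints first force the isomorphism type of $G$: for $m\in\{3,4,6,8\}$ the group is determined directly, while the families $m=2^\ell$ and $m=3^\ell$ correspond to the prime-power groups that already appear in the binomial analysis of Batyrev and Hofscheier.

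The heart of the matter, which I expect to be the main obstacle, is to show that the character multiset $\{\chi_0,\dots,\chi_d\}$ realizing the trinomial is unique up to relabeling and $\mathrm{Aut}(G)$. A priori, inequivalent weight systems on the same group could produce the same weight distribution, so the numerical data $(G,d,h^*)$ need not obviously determine the geometry. To resolve this I would match the admissible systems against the prime-power weight systems classified by Batyrev and Hofscheier: the families $m\in\{3,4,6,8\}$, $m=2^\ell$ and $m=3^\ell$ of Corollary~\ref{coro} are exactly those for which a group carries a weight function valued in $\{0,k,2k\}$ with the prescribed multiplicities, and the rigidity of such systems for fixed group and dimension is precisely the substance of their binomial classification \cite{BatyrevHof2}. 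Once the system is known to be unique, the Gorenstein element $g^\ast$ and the entire weight distribution are determined by it, and hence so is the isomorphism class of $\Delta$. This rigidity is exactly what Theorem~\ref{main} packages, so the Corollary is immediate from it; the degenerate case $m=2$ is the binomial $1+t^{2k}$, already covered directly by \cite{BatyrevHof2}.

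Finally, I would emphasize that the hypothesis $k\ge 2$ is essential to this argument: it is what forces $h^*_1=0$, hence that $\Delta$ is an \emph{empty} simplex, placing us in the regime where the prime-power weight-system rigidity of Batyrev and Hofscheier applies. For $k=1$ one is instead in the degree-two Gorenstein setting of Batyrev and Juny, which is governed by different methods and where a polytope need not even be a simplex, so the clean uniqueness statement of the Corollary is special to the higher-degree empty-simplex case.
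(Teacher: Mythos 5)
Your proposal is correct and ultimately takes the same route as the paper: the paper gives no separate argument for this corollary but derives it immediately from Theorem~\ref{main}, which for each admissible triple $(k,m,d)$ specifies the group $\Lambda_\Delta$ uniquely up to permutation of the columns, so that the Batyrev--Hofscheier correspondence (Theorem~2.1) determines $\Delta$ up to isomorphism — exactly your final step. The intermediate scaffolding in your write-up (the character/weight-system reformulation, the palindromicity constraints) is just a restatement of the $\Lambda_\Delta$ formalism of Section~2 and is not needed once Theorem~\ref{main} is invoked.
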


Let us note that for $k \ge 2$ any of these lattice simplices that are not lattice pyramids have dimension $3k-1$ or $4k-1$, see Theorem~\ref{main}.

\subsection{Future work}

It is known \cite{HNP09} that there exists a function $f$ in terms of the degree $k$ and the leading coefficient $b$ of an $h^*$-polynomial of a lattice polytope $\Delta$ such that 
$\vol(\Delta) \le f(b,k)$. In the situation of Corollary~\ref{coro} (where $b=1$) one observes that $\Delta$ satisfies $m \le 9 k$. 
In other words, \[\vol(\Delta) \le \frac{9}{2} \deg(\Delta).\]
Moreover, equality implies $k=1$ and so as described in (2) above $\Delta$ is isomorphic to a lattice pyramid over $\con((0,0),(3,0),(0,3))$. Now, 
having seen how Scott's theorem could be generalized from dimension two to degree two \cite{Treutlein}, we make the following guess about a more general class of $h^*$-trinomials:

\begin{Conjecture} Let $\Delta$ be a lattice polytope with $h^*$-polynomial $1 + a t^k + b t^{2k}$ and $b\ge 2$. Then 
$a+b+1 \le (4b+4) k$, or equivalently, 
\[\vol(\Delta) \le \frac{4b+4}{2} \deg(\Delta).\]
\end{Conjecture}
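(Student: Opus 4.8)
The plan is to split off the base case $k=1$ and then concentrate on $k \ge 2$. For $k=1$ the hypothesis reads $h^*_\Delta(t)=1+at+bt^2$ with $b \ge 2$, and the desired bound $a+b+1 \le 4b+4$ is equivalent to $a \le 3b+3$, which is precisely Scott's theorem in the degree-two form of Treutlein and Henk recorded in item~(2) of the background; so nothing new is required there. I would therefore assume $k \ge 2$. Then $h^*_1=0$, so $\Delta$ has exactly $d+1$ lattice points and is an empty simplex, and since both sides of the inequality are unchanged under passing to a lattice pyramid I may assume $\Delta$ is not a lattice pyramid. This places the problem inside the Batyrev--Hofscheier framework used throughout the paper.

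The first real step is to rewrite the target as $a \le (4k-1)(b+1)$, which displays it as the degree-$2k$ analogue of Scott's bound $a \le 3b+3$. I would then encode the empty simplex by its associated finite abelian group $G$ of order $\vol(\Delta)=1+a+b$, equipped with the weight function $\mathrm{wt}(g)=\sum_i q_i$ read off from the fractional barycentric coordinates $q_i \in [0,1)$ of $g$, so that $h^*_j=\sharp\{g \in G : \mathrm{wt}(g)=j\}$. The trinomial hypothesis becomes the statement that every nonzero element of $G$ has weight $k$ or $2k$, with exactly $a$ of weight $k$ and $b$ of weight $2k$.

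Next I would run the negation involution $g \mapsto -g$ together with the identity $\mathrm{wt}(g)+\mathrm{wt}(-g)=\sharp\supp(g)$, where $\supp(g)=\{i : q_i \ne 0\}$. This forces a weight-$k$ element to have support $2k$ or $3k$, and a weight-$2k$ element to have support $3k$ or $4k$; moreover negation matches the support-$3k$ elements of weight $k$ bijectively with those of weight $2k$. Writing $n$ for this common number, $a_A$ for the number of weight-$k$ support-$2k$ elements, and $b_C$ for the number of weight-$2k$ support-$4k$ elements, we obtain $a=a_A+n$ and $b=n+b_C$. The inequality $a \le (4k-1)(b+1)$ then follows from the single Scott-type estimate $a_A \le (4k-1)(b_C+1)$, since $a=a_A+n \le (4k-1)(b_C+1)+n \le (4k-1)(b_C+1+n)=(4k-1)(b+1)$.

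The hard part, as expected, is this last estimate. A weight-$k$ support-$2k$ element is, read off on its support, a tuple of $2k$ fractional parts in $(0,1)$ summing to $k$ --- exactly the higher-$k$ incarnation of the ``free segment'' $(\alpha,1-\alpha)$ that governs Scott's theorem when $k=1$. Bounding the number $a_A$ of such elements, uniformly in $k$ and with the sharp factor $4k-1$, under the strong constraint that $G$ is a group all of whose nonzero elements have weight in $\{k,2k\}$, is where the genuine difficulty lies; the reductions above are comparatively formal. To attack it I would try to project $G$ onto well-chosen pairs of barycentric coordinates so as to produce planar configurations to which Scott's theorem itself applies, or to invoke Stapledon's Ehrhart-theoretic generalizations of Scott's inequalities for $h^*$-vectors; reconciling any such argument with the exceptional behaviour at $b=1$ (which is exactly why the conjecture excludes that case, the sharp constant there being $9/2$ rather than $2b+2$) would be the crux.
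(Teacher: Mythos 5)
The statement you set out to prove is not proved in the paper at all: it appears there as a \emph{Conjecture} (the authors explicitly call it a ``guess''), so the only question is whether your argument closes the problem on its own, and it does not. Your preliminary reductions are all correct: for $k=1$ the claim $a\le 3b+3$ is exactly the degree-two bound of Treutlein/Henk quoted in the paper's background; for $k\ge 2$ one has $h_1^*=0$, so $\Delta$ is an empty simplex, and both sides of the inequality are pyramid-invariant, so one may assume $\Delta$ is not a lattice pyramid and work with the group $\Lambda_\Delta$ and the height function; the identity $\heit(x)+\heit(-x)=\sharp\supp(x)$ does split the height-$k$ elements into support-$2k$ and support-$3k$ classes and the height-$2k$ elements into support-$3k$ and support-$4k$ classes, with negation giving the bijection between the two support-$3k$ classes; and the arithmetic showing that $a_A\le (4k-1)(b_C+1)$ implies $a\le (4k-1)(b+1)$, which is the conjectured inequality, is valid.

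However, the proposal stops exactly where the conjecture begins. The estimate $a_A\le (4k-1)(b_C+1)$ --- the Scott-type bound on the number of height-$k$, support-$2k$ elements of $\Lambda_\Delta$ in terms of the support-$4k$ ones --- is never proved; you only name two candidate strategies (projecting onto pairs of barycentric coordinates to reach planar Scott, or invoking Stapledon-type inequalities) without carrying either out, and you yourself identify this as the crux. Since everything before it is formal bookkeeping with the involution $g\mapsto -g$, the entire content of the conjecture is concentrated in that single unproven estimate; in particular nothing in your argument uses the hypothesis $b\ge 2$ in an essential way, even though the $b=1$ case shows the claimed constant $(4b+4)/2$ is then false (the sharp constant is $9/2$). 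So this is a genuine gap rather than a proof: the conjecture remains open, and what you have is a clean, correct reduction that could serve as a first step but does not settle the statement.
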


\subsection{Organization of the paper}

In Section~2 we recall the notation and results by Batyrev and Hofscheier. In Section~3 we present and prove the main result of this paper (Theorem~\ref{main}): the classification of Gorenstein polytopes with $h^*$-trinomials of degree $\ge 3$.

\smallskip

\begin{Ack}{\rm We would like to thank Alexander Kasprzyk for discussion. 
The first author is partially supported by a JSPS Fellowship for Young Scientists and by JSPS Grant-in-Aid for Young Scientists (B) $\sharp$26800015.
The second author is partially supported by the Vetenskapsr\aa det grant NT:2014-3991.

}
\end{Ack}

\section{The approach by Batyrev and Hofscheier}

Let us describe the notions used by Batyrev and Hofscheier in \cite{BatyrevHof1, BatyrevHof2}.

\subsection{The correspondence to subgroups }

For a lattice simplex $\Delta \subset \RR^d$ of dimension $d$ with a chosen ordering of the vertices $v_0,v_1,\ldots,v_d \in \ZZ^d$, 
let $$\Lambda_\Delta = \left\{(x_0,\ldots,x_d) \in [0,1)^{d+1} : \sum_{i=0}^d x_i (v_i,1) \in \ZZ^{d+1}\right\}.$$ 
Then $\Lambda_\Delta$ is a subgroup of the additive group $(\RR/\ZZ)^{d+1}$, here identified with $[0,1)^{d+1}$: 
for $(x_0,\ldots,x_d) \in \Lambda_\Delta$ and $(y_0,\ldots,y_d) \in \Lambda_\Delta$, 
we define $(x_0,\ldots,x_d) + (y_0,\ldots,y_d) = (\{x_0+y_0\},\ldots,\{x_d + y_d\}) \in \Lambda_\Delta$, 
where for a real number $r$, $\{r\}$ denotes the fractional part of $r$, i.e., $\{r\}=r - \lfr r \rfr$. 
For a positive integer $j$ and $x \in \Lambda_\Delta$, we set $jx = \underbrace{x+ \cdots +x}_j$. 
We denote the unit of $\Lambda_\Delta$ by ${\bf 0}$ and the inverse of $x$ by $-x$. Note that e.g. $-(\frac{1}{2},\frac{1}{3})=(\frac{1}{2},\frac{2}{3})$. 
For $x = (x_0,\ldots,x_d) \in \Lambda_\Delta$, we define $\heit(x)=\sum_{i=0}^d x_i \in \ZZ.$ 

It is well known that the coefficients of the $h^*$-polynomial $\sum_{i=0}^d h_i^*t^i$ of the lattice simplex $\Delta$ can be computed as follows: 
$$h_i^*=\sharp \{ x \in \Lambda_\Delta : \heit(x) = i\} \text{ for } i=0, \ldots, d.$$
In particular, 
\[\sharp \Lambda_\Delta = \vol(\Delta).\]

\begin{Theorem}[{\cite[Theorem 2.3]{BatyrevHof2}}]
There is a bijection between isomorphism classes of $d$-dimensional lattice simplices with a chosen ordering of their vertices and finite subgroups of $(\RR/\ZZ)^{d+1}$. In particular, two lattice simplices $\Delta$, $\Delta'$ are isomorphic if and only if there 
exists an ordering of their vertices such that $\Lambda_\Delta=\Lambda_\Delta'$. 
\end{Theorem}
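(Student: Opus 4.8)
The plan is to establish the claimed bijection by constructing explicit maps in both directions and showing they are mutually inverse, then deducing the isomorphism criterion as a corollary. First I would verify that the assignment $\Delta \mapsto \Lambda_\Delta$ is well-defined as a map into finite subgroups: the finiteness of $\Lambda_\Delta$ follows because $\sharp \Lambda_\Delta = \vol(\Delta) < \infty$, and one checks directly that $\Lambda_\Delta$ is closed under the fractional-part addition defined in the excerpt (the condition $\sum_i x_i(v_i,1) \in \ZZ^{d+1}$ is preserved under componentwise addition modulo $\ZZ$, since subtracting integer parts only changes the sum by an integer combination of the $(v_i,1)$). The key structural observation is that $\Lambda_\Delta$ is precisely the quotient group $\ZZ^{d+1} / N$, where $N$ is the sublattice spanned by the $(v_i,1)$; the elements of $\Lambda_\Delta$ are the canonical fractional-coordinate representatives of the cosets, and the height function records which graded piece (in the grading by total degree) each coset representative lands in.

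Next I would construct the reverse map. Given a finite subgroup $G \subset (\RR/\ZZ)^{d+1}$, I would produce a lattice simplex realizing it. The natural construction is to take the lattice $M_G = \ZZ^{d+1} + \sum_{g \in G} \ZZ \tilde{g}$, where $\tilde{g} \in [0,1)^{d+1}$ is the representative of $g$, sitting inside $\QQ^{d+1}$; equivalently one takes the dual description and realizes the simplex whose vertices are the images of the standard basis vectors under the map dual to the inclusion $\ZZ^{d+1} \hookrightarrow M_G$. Concretely, the cone over the simplex-with-ordered-vertices corresponds to the standard simplicial cone $\RR_{\geq 0}^{d+1}$ equipped with the lattice $M_G$, and the group of lattice points in the half-open parallelepiped modulo the cone generators recovers exactly $G$. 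I would check that applying $\Delta \mapsto \Lambda_\Delta$ to this construction returns $G$, and that starting from $\Delta$, passing to $\Lambda_\Delta$ and building the simplex back recovers $\Delta$ up to lattice isomorphism; the latter amounts to the statement that the pair (simplicial cone, lattice) is determined up to the obvious equivalence by the finite abelian group of its quotient.

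The isomorphism criterion in the "in particular" clause then follows almost formally. If $\Delta$ and $\Delta'$ are isomorphic via an affine lattice-preserving transformation, that transformation sends an ordering of the vertices of $\Delta$ to a corresponding ordering of $\Delta'$ and induces an isomorphism of the associated lattices fixing the grading $(\cdot,1)$; unwinding the definitions shows $\Lambda_\Delta = \Lambda_{\Delta'}$ for these matched orderings. Conversely, if $\Lambda_\Delta = \Lambda_{\Delta'}$ for some orderings, then the two simplices correspond to the same point in the bijection, hence are isomorphic — this is immediate once the bijection is established. I would be careful here that "isomorphic" allows reordering vertices, which is why the criterion only asserts the existence of \emph{some} compatible ordering.

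The main obstacle I expect is proving that the reverse map is well-defined and genuinely inverse — that is, verifying that every finite subgroup $G$ arises as $\Lambda_\Delta$ for a \emph{lattice} simplex (the constructed vertices must be integral after the coordinate change) and that non-isomorphic ordered simplices cannot yield the same subgroup. The cleanest route is to phrase everything in terms of the exact sequence $0 \to \ZZ^{d+1} \to M_G \to G \to 0$ and to interpret $\Lambda_\Delta$ intrinsically as the torsion quotient $\ZZ^{d+1}/\langle (v_i,1) \rangle$; once this lattice-theoretic reformulation is in place, both the surjectivity (realizing arbitrary $G$) and the injectivity (distinguishing non-isomorphic simplices) reduce to standard facts about finitely generated abelian groups and their presentations, with the grading by height ensuring that the recovered cone generators are canonically determined.
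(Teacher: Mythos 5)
The paper itself offers no proof of this statement --- it is quoted from Batyrev--Hofscheier (their Theorem 2.3) --- so there is no internal argument to compare yours against; I can only judge your proposal against the result itself. Your analysis of the forward map is fine, but the surjectivity half of your bijection has a genuine gap, and in fact the statement is false as literally transcribed, so no construction can close that gap. Every $x=(x_0,\ldots,x_d) \in \Lambda_\Delta$ satisfies $\sum_{i=0}^d x_i \in \ZZ$, since this sum is the last coordinate of $\sum_i x_i(v_i,1) \in \ZZ^{d+1}$; the paper records exactly this when it defines $\heit(x) \in \ZZ$. Hence $\Lambda_\Delta$ always lies in the proper closed subgroup $\left\{x \in (\RR/\ZZ)^{d+1} : \sum_{i=0}^d x_i = 0\right\}$, and any finite subgroup violating this constraint --- for instance the order-two subgroup generated by $(1/2,0,\ldots,0)$ --- is not $\Lambda_\Delta$ of any lattice simplex. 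Your inverse construction silently breaks at exactly this point: for such a $G$, the pair consisting of the cone $\RR_{\geq 0}^{d+1}$ and the lattice $M_G=\ZZ^{d+1}+\sum_{g \in G}\ZZ\tilde g$ is not the cone over any lattice simplex whose vertices lift to $e_1,\ldots,e_{d+1}$, because that would require a linear functional $h\colon M_G \to \ZZ$ with $h(e_i)=1$ for all $i$; the conditions $h(e_i)=1$ force $h$ to be the coordinate sum, which is not integer-valued on $M_G$. So the group you read off the half-open parallelepiped is not realized by any simplex.

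The repair is the restriction implicit in Batyrev--Hofscheier (note the $SL(n,\CC)$ in the title of their paper): the bijection is with finite subgroups $G$ satisfying $\sum_i g_i = 0$ in $\RR/\ZZ$ for every $g \in G$, i.e.\ finite subgroups of the diagonal maximal torus of $SL(d+1,\CC)$. Under that hypothesis your construction does work: the coordinate-sum functional is then integral on $M_G$ and equals $1$ on each $e_i$, so $\con(e_1,\ldots,e_{d+1})$ is a $d$-dimensional lattice simplex with respect to the affine lattice $M_G \cap \left\{y : \sum_i y_i = 1\right\}$, and its associated group is exactly $G$. I would also push you to make the injectivity step concrete rather than appealing to ``standard facts about finitely generated abelian groups'': if $\Lambda_\Delta = \Lambda_{\Delta'}$ for suitable orderings, the linear map $T$ with $T(v_i,1)=(v_i',1)$ is unimodular because $\ZZ^{d+1}$ is generated by the sublattice $\langle (v_0,1),\ldots,(v_d,1)\rangle$ together with the points $\sum_i x_i(v_i,1)$ for $x \in \Lambda_\Delta$, and $T$ maps all of these into $\ZZ^{d+1}$ (likewise for $T^{-1}$); since $T$ also preserves the last coordinate, it induces an affine lattice isomorphism $\Delta \to \Delta'$. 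That explicit height-preserving unimodularity argument, not abstract group theory, is what makes injectivity and the ``in particular'' clause go through.
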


We recall the following statement which describes when the lattice simplex $\Delta$ 
is a lattice pyramid in terms of $\Lambda_\Delta$. 

\begin{Proposition}[{\cite[Lemma 2.3]{Nil08}}]\label{pyramid} 
Let $\Delta \subset \RR^d$ be a lattice simplex of dimension $d$. Then $\Delta$ is a lattice pyramid if and only if 
there is $i \in \{0,\ldots,d\}$ such that $x_i=0$ for all $(x_0,\ldots,x_d) \in \Lambda_\Delta$. 
\end{Proposition}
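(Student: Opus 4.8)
The plan is to recast the condition through the sublattice $\Lambda := \bigoplus_{i=0}^d \ZZ\,(v_i,1) \subseteq \ZZ^{d+1}$ generated by the homogenized vertices, and its dual. Since $\Delta$ is full-dimensional, the vectors $(v_0,1),\ldots,(v_d,1)$ form a basis of $\RR^{d+1}$; let $u_0,\ldots,u_d \in (\RR^{d+1})^\ast$ be the dual basis, so that $\langle u_i,(v_j,1)\rangle = \delta_{ij}$. Every $n \in \ZZ^{d+1}$ has the unique expansion $n = \sum_i \langle u_i,n\rangle\,(v_i,1)$, and subtracting the integral vector $\sum_i \lfloor \langle u_i,n\rangle\rfloor\,(v_i,1)$ shows that $n \mapsto (\{\langle u_0,n\rangle\},\ldots,\{\langle u_d,n\rangle\})$ is a surjective homomorphism $\ZZ^{d+1} \twoheadrightarrow \Lambda_\Delta$ with kernel $\Lambda$. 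In particular the $i$-th coordinate of a general element of $\Lambda_\Delta$ is $\{\langle u_i,n\rangle\}$, so ``$x_i = 0$ for all $(x_0,\ldots,x_d)\in\Lambda_\Delta$'' is equivalent to $\langle u_i,n\rangle \in \ZZ$ for every $n \in \ZZ^{d+1}$, that is, to $u_i \in (\ZZ^{d+1})^\ast = \ZZ^{d+1}$. Fix $i=0$ without loss of generality; it then suffices to prove that $u_0 \in \ZZ^{d+1}$ holds if and only if $\Delta$ is a lattice pyramid with apex $v_0$.

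Next I would read off the geometric meaning of $u_0$. Writing $u_0 = (a,b) \in (\RR^d)^\ast \times \RR$, the equations $\langle u_0,(v_j,1)\rangle = \delta_{0j}$ become $\langle a,v_j\rangle + b = 0$ for $j \geq 1$ and $\langle a,v_0\rangle + b = 1$. Thus $A := \operatorname{aff}(v_1,\ldots,v_d) = \{x : \langle a,x\rangle = -b\}$ is the hyperplane carrying the base, and $v_0$ takes value $1$ under the affine functional $\langle a,\cdot\rangle + b$. The easy half is now transparent: if $\Delta$ is a lattice pyramid with apex $v_0$, choose unimodular coordinates with $v_0 = (0,\ldots,0,1)$ and $v_1,\ldots,v_d \in \{x_d=0\}$; then $u_0 = (e_d^\ast,0)$ is integral, so $x_0=0$ throughout $\Lambda_\Delta$ (indeed the $d$-th coordinate of $\sum_i x_i(v_i,1)\in\ZZ^{d+1}$ equals $x_0$, forcing $x_0\in\ZZ\cap[0,1)=\{0\}$).

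For the converse I would run the normalization backwards, and this is the delicate step. Assume $u_0 = (a,b)$ is integral, let $g$ be the content (gcd of the entries) of $a$, and set $a' = a/g$, a primitive integral functional. From $g\langle a',v_j\rangle = -b$ for the lattice points $v_1,\ldots,v_d \in A$ one gets $g \mid b$, so $A = \{x:\langle a',x\rangle = c\}$ with $c := -b/g \in \ZZ$ is a \emph{lattice} hyperplane; moreover $g\,(\langle a',v_0\rangle - c) = \langle a,v_0\rangle + b = 1$ forces $g=1$ and $\langle a',v_0\rangle - c = 1$. Hence $a$ is already primitive and $v_0$ lies at lattice distance exactly $1$ from $A$. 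A standard unimodular normalization then concludes: map $A$ to $\{x_d=0\}$ (possible since $a'$ is primitive and $c\in\ZZ$), so that $v_1,\ldots,v_d \in \ZZ^{d-1}\times\{0\}$ and $v_0 \in \ZZ^{d-1}\times\{1\}$, and apply a shear fixing $\{x_d=0\}$ to move $v_0$ to $(0,\ldots,0,1)$, exhibiting $\Delta$ as the lattice pyramid over $\con(v_1,\ldots,v_d)$. I expect the main obstacle to be exactly this content computation showing $g=1$: one must upgrade integrality of the abstract dual vector $u_0$ to primitivity of the genuine defining functional of the base hyperplane, as it is primitivity (height exactly $1$) that distinguishes a true lattice pyramid from a merely rational cone over the base.
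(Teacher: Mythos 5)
Your proposal is correct, but there is nothing in the paper to compare it with: the paper does not prove this proposition at all, it simply quotes it as Lemma 2.3 of [Nil08]. So your argument stands as a self-contained replacement for that citation, and it checks out. The key reduction --- identifying $\Lambda_\Delta$ with $\ZZ^{d+1}/\Lambda$ via the surjective homomorphism $n \mapsto (\{\langle u_0,n\rangle\},\ldots,\{\langle u_d,n\rangle\})$ with kernel $\Lambda$, so that ``$x_i=0$ on all of $\Lambda_\Delta$'' becomes exactly $u_i \in (\ZZ^{d+1})^\ast = \ZZ^{d+1}$ --- is sound (the verification of surjectivity and of the kernel is immediate from the expansion $n=\sum_i \langle u_i,n\rangle (v_i,1)$), and the delicate converse step is handled correctly: from $u_0=(a,b)$ integral you get $g \mid b$ for the content $g$ of $a$, and then $g\left(\langle a',v_0\rangle - c\right)=1$ forces $g=1$, i.e.\ primitivity of the defining functional of the base hyperplane and lattice distance exactly $1$ for $v_0$; the concluding unimodular normalization plus integral shear puts $\Delta$ precisely in the form $\con(\Delta''\times\{0\},(0,\ldots,0,1))$ demanded by the paper's definition of lattice pyramid. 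Two small points you should make explicit if this were written up: $a\neq 0$ (otherwise $\langle a,v_j\rangle+b$ could not take both values $0$ and $1$), so the content is well defined; and the condition ``$x_i=0$ for all elements of $\Lambda_\Delta$'' is invariant under affine unimodular maps carrying the vertex ordering along, which is what justifies choosing coordinates in the easy direction. A pleasant by-product of your route is that it proves the sharper, vertex-labelled equivalence ($x_i \equiv 0$ on $\Lambda_\Delta$ if and only if $\Delta$ is a lattice pyramid with apex $v_i$), which is the form actually used later in the paper, e.g.\ in the proof of Proposition~\ref{order}.
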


\subsection{The classification of lattice polytopes with binomial $h^*$-polynomials}

\label{batyrevhofsection}

We summarize results by Batyrev and Hofscheier from \cite{BatyrevHof1} and \cite{BatyrevHof2} 
which play a crucial role in our proof of Theorem \ref{main}. 

First, let us describe their generalization of White's theorem.

\begin{Theorem}[\cite{BatyrevHof1}]\label{key1}
Let $k \geq 2$ and let $\Delta$ be a lattice simplex of dimension $2k-1$ with $\vol(\Delta)=m$ 
which is not a lattice pyramid. Then the following statements are equivalent: 
\begin{itemize}
\item[(a)] the $h^*$-polynomial of $\Delta$ is $1+(m-1)t^k$; 
\item[(b)] $\Delta$ is isomorphic to the Cayley polytope 
$\Delta_1*\cdots*\Delta_k$ of empty simplices $\Delta_i \subset \RR^k$ of dimension 1; 
\item[(c)] $\Lambda_\Delta$ is cyclic and generated by $(a_1/m,(m-a_1)/m,\ldots,a_k/m,(m-a_k)/m) \in [0,1)^{2k}$ after reordering, 
where each $0<a_i \leq m/2$ is an integer which is coprime to $m$. 
\end{itemize}
\end{Theorem}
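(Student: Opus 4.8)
The plan is to prove the equivalences through the dictionary between lattice simplices and subgroups $\Lambda_\Delta$, using the height formula $h_i^* = \sharp\{x \in \Lambda_\Delta : \heit(x) = i\}$ together with $\sharp\Lambda_\Delta = \vol(\Delta) = m$. By Proposition~\ref{pyramid}, the hypothesis that $\Delta$ is not a lattice pyramid says that no coordinate vanishes identically on $\Lambda_\Delta$. The implication (c)$\Rightarrow$(a) is then a direct computation: if $g = (a_1/m,(m-a_1)/m,\ldots,a_k/m,(m-a_k)/m)$ generates $\Lambda_\Delta$ with each $\gcd(a_i,m)=1$, then for $j \not\equiv 0 \pmod m$ each pair contributes $\{ja_i/m\} + \{-ja_i/m\} = 1$ (nonzero, since $a_i$ is a unit), so $\heit(jg) = k$; hence all $m-1$ nonzero elements sit in height $k$ and $h^*_\Delta(t) = 1 + (m-1)t^k$.

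The substance is (a)$\Rightarrow$(c). First I would record the identity $\heit(x) + \heit(-x) = \supp(x)$, where $\supp(x) = \sharp\{i : x_i \ne 0\}$, which follows from $x_i + \{-x_i\} = 1$ whenever $x_i \neq 0$. Under (a) every nonzero $x$ satisfies $\heit(x) = \heit(-x) = k$, forcing $\supp(x) = 2k$: \emph{every nonzero element has full support}. Cyclicity then follows at once: were $\Lambda_\Delta$ non-cyclic, it would contain a subgroup $H \cong (\ZZ/p)^2$ for some prime $p$, and for each coordinate $i$ the projection $H \to \RR/\ZZ$ would land in $\tfrac1p\ZZ/\ZZ$ with kernel of order at least $p$, producing a nonzero element of $H$ with vanishing $i$-th coordinate, contradicting full support. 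So $\Lambda_\Delta = \langle g\rangle$ is cyclic of order $m$, and full support again forces each coordinate $g_i$ to have order exactly $m$ (otherwise a proper multiple of $g$ would kill that coordinate), i.e. $g_i = a_i/m$ with $\gcd(a_i,m) = 1$.

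The remaining and hardest point is the pairing: the coordinates $a_0,\ldots,a_{2k-1}$ must split into $k$ pairs $\{a, m-a\}$. Rewriting the condition $\heit(jg) = k$ for all $j$ in terms of the sawtooth function $((x)) = \{x\} - \tfrac12$, and using that the $2k$ coordinates contribute $\sum_i \tfrac12 = k$, it becomes $\sum_{t} (N(t) - N(m-t))\,((jt/m)) = 0$ for all $j$, where $N(t)$ counts the coordinates congruent to $t \bmod m$; the desired pairing is exactly the assertion $N(t) = N(m-t)$. I would deduce this from the linear independence of the sawtooth vectors $(((jt/m)))_j$ for $1 \le t \le \lfloor (m-1)/2 \rfloor$, equivalently the nonsingularity of the associated cotangent/Dedekind-type matrix. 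Establishing this independence cleanly is the main obstacle; once it is in hand, choosing in each pair the representative with $0 < a_i \le m/2$ yields (c) in the stated normal form.

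Finally, for (b)$\Leftrightarrow$(c) I would compute $\Lambda_\Delta$ directly for a Cayley polytope $\Delta_1 * \cdots * \Delta_k$ of primitive segments $\Delta_i = \con(p_i,q_i) \subset \RR^k$. Ordering the $2k$ vertices as $(e_i,p_i),(e_i,q_i)$, the membership conditions reduce to $x_{2i-2}+x_{2i-1} \in \{0,1\}$ together with $\sum_i(x_{2i-2}\,p_i + x_{2i-1}\,q_i) \in \ZZ^k$; the relative lattice positions of the segments encode precisely the residues $a_i$, so that $\Lambda_\Delta$ is cyclic with generator of the paired form, and conversely every such cyclic $\Lambda_\Delta$ is realized by a suitable placement of the segments. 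Matching these two descriptions closes the cycle of implications.
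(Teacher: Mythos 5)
Your proposal gets the easy parts right, and they are the same elementary manipulations this paper itself uses elsewhere (the identity $\sharp\supp(x)=\heit(x)+\heit(-x)$, as in Lemma~\ref{lem1} and Lemma~\ref{x}): (c)$\Rightarrow$(a) is a correct direct computation, full support of every nonzero element follows from palindromicity of heights, your $(\ZZ/p\ZZ)^2$-projection argument for cyclicity is valid, and full support does force every coordinate of a generator to be a unit $a_i/m$. But note that the paper contains no proof of Theorem~\ref{key1} at all --- it is imported wholesale from \cite{BatyrevHof1} --- so the benchmark is Batyrev--Hofscheier's original argument, and the place where you stop (``establishing this independence cleanly is the main obstacle; once it is in hand\dots'') is precisely where that argument does all of its work. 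The pairing step is not a deferrable technicality: it is the entire content of the theorem (already for $k=2$ it is the hard half of White's theorem), so what you have is a correct reduction of the theorem to an unproved claim carrying all of its difficulty, plus a plausible but routine sketch of (b)$\Leftrightarrow$(c).

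Moreover, the independence claim is genuinely delicate, in a way your sketch underestimates. If one imposes the conditions only for $j$ coprime to $m$ --- which is what the ``nonsingularity of the cotangent/Dedekind-type matrix'' framing naturally suggests, since that square matrix is indexed by units --- the conclusion is \emph{false}. Concretely, take $m=21$, $k=3$ and the multiset $A=\{1,4,10,13,16,19\}$ of units that are $\equiv 1 \bmod 3$: one checks that $\sum_{a\in A}\{ja/21\}=3$ for \emph{every} $j$ coprime to $21$, yet $A$ contains no pair $\{a,21-a\}$ at all. (The hidden relation comes from the odd imprimitive character $\chi$ mod $21$ induced by the quadratic character $\chi_3$ mod $3$: its Bernoulli sum $\frac{1}{21}\sum_t \chi(t)\,t$ vanishes because $\chi_3(7)=1$. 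The failure of $A$ is detected only at the non-unit multiples, e.g. $\sum_{a\in A}\{7a/21\}=2$.) So any correct proof must use all $j$, including those sharing factors with $m$, and the resulting statement --- that the vectors $\bigl(\{jt/m\}-\tfrac12\bigr)_{j=1,\dots,m-1}$, $t$ a unit, $1\le t< m/2$, are independent --- is, after Fourier analysis on $\ZZ/m\ZZ$ and a pushforward along each divisor of $m$, equivalent to the non-vanishing of $\sum_u \chi(u)\,u$ for odd \emph{primitive} Dirichlet characters, i.e.\ to Dirichlet's $L(1,\chi)\neq 0$. That is real number-theoretic input (the same kind that enters the Morrison--Stevens ``terminal lemma''), not a linear-algebra verification, and filling it in is what a complete proof of Theorem~\ref{key1} requires.
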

%
%

Batyrev and Hofscheier use the language of linear codes to consider the case $d > 2k-1$. A {\em linear code} over $\FF_p$ with {\em block length} $n$ is a subspace $L$ of the finite vector space $\FF^n_p$ (where $p$ is a prime). $A \in \FF_p^{r \times n}$ (an $r \times n$ matrix with entries in $\FF_p$) is the {\em generator matrix} of such an $r$-dimensional linear code $L$ if the rows of $A$ form a basis of $L$. 

\begin{Definition}{\em 
Fix a natural number $r$ and a prime number $p$. Let $n=(p^r-1)/(p-1)$ be the number of 
points in $(r-1)$-dimensional projective space over $\FF_p$. Consider the $r \times n$-matrix 
$A \in \FF_p^{r \times n}$ whose columns consist of nonzero vectors from each 1-dimensional subspace 
of $\FF_p^r$. Then $A$ is the generator matrix of the {\em simplex code} of dimension $r$ over $\FF_p$ with block length $n$.
}\end{Definition}

\begin{Theorem}[\cite{BatyrevHof2}]\label{key2}
Let $d \geq 3$ and let $\Delta$ be a lattice simplex of dimension $d$ which is not a lattice pyramid. 
Let the $h^*$-polynomial of $\Delta$ be $1+(m-1)t^k$ for some $m \geq 2$ and $1 < k <(d+1)/2$. Then there exists a prime number $p$ such that every non-trivial element of $\Lambda_\Delta$ has order $p$. In particular, $\Lambda_\Delta$ can be identified with 
$p \Lambda_\Delta \subseteq \{0, \ldots, p-1\}^{d+1}$, a linear code over $\FF_p$ with block length $d+1$. The order $m$ of $\Lambda_\Delta$ is equal to $p^r$, where the positive integer $r$ is the dimension of the linear code $p \Lambda_\Delta$. The numbers $p,d,k,r$ are related by the equation 
\begin{equation}(p^r-p^{r-1})(d+1)=2k(p^r-1).\label{eqq}\end{equation}
A generator matrix of the linear code $p\Lambda_\Delta$ is given (up to permutation of the columns) by 
the rows in the following $r \times (d+1)$-matrix:
$$(A,\ldots,A) \text{ if $p=2$, or }\; (A, -A, \ldots, A, -A) \text{ if $p>2$},$$ 
where $A$ is the generator matrix of the $r$-dimensional simplex code over $\FF_p$ and $A$ (resp. the pair $(A,-A)$) is repeated 
$k/2^{r-2}$ (resp. $k/ p^{r-1}$) times if $p=2$ (resp. if $p>2$).
\end{Theorem}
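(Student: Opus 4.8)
The plan is to work entirely with the subgroup $\Lambda_\Delta \subseteq [0,1)^{d+1}$ and the height function $\heit$, using Theorems~\ref{key1} as an input on cyclic subgroups. First I would translate the hypothesis. Since $h^*_\Delta(t) = 1 + (m-1)t^k$, the formula $h_i^* = \sharp\{x \in \Lambda_\Delta : \heit(x) = i\}$ gives $\sharp\Lambda_\Delta = m$ and shows that \emph{every} non-trivial $x \in \Lambda_\Delta$ satisfies $\heit(x) = k$. The basic tool is the carry identity $\heit(x + y) = \heit(x) + \heit(y) - c(x,y)$, where $c(x,y) = \sharp\{i : x_i + y_i \geq 1\}$. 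Applying it to $y = -x$ gives $\heit(x) + \heit(-x) = \sharp\supp(x)$, because $x_i + (-x)_i = 1$ exactly on $\supp(x)$. Whether or not $x$ has order $2$, the left-hand side is $2k$, so the first key fact is that every non-trivial element has support of size exactly $2k$.

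Next I would record two consequences. For non-trivial $x$ of order $n$ and $1 \le j \le n-1$ one has $\supp(jx) \subseteq \supp(x)$, and both have size $2k$; hence $\supp(jx) = \supp(x)$, so no coordinate of $x$ on its support is killed before step $n$, i.e. each $x_i$ with $i \in \supp(x)$ has order exactly $n$ in $\QQ/\ZZ$. Second, each projection $\pi_i \colon \Lambda_\Delta \to \QQ/\ZZ$, $x \mapsto x_i$, is a character, and by Proposition~\ref{pyramid} the non-pyramid hypothesis means precisely that all $\pi_i$ are non-trivial. Writing $q_i$ for the order of $\pi_i$ and double-counting $\sum_{x \ne 0}\sharp\supp(x)$ across coordinates yields $\sum_{i=0}^d (1 - 1/q_i) = 2k(m-1)/m$.

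The heart of the argument, and the step I expect to be the main obstacle, is to prove that $\Lambda_\Delta$ is an elementary abelian $p$-group; the mechanism is that the strict inequality $d+1 > 2k$ rules out composite exponent. My plan is to apply Theorem~\ref{key1} to cyclic subgroups: if $x$ has order $n$, then restricting $\Lambda_\Delta$ to the $2k$ coordinates of $A := \supp(x)$ and to $\langle x\rangle$ gives a cyclic lattice simplex of dimension $2k-1$, not a pyramid, with $h^*$-polynomial $1 + (n-1)t^k$, so condition (c) of Theorem~\ref{key1} forces a pairing of $A$ with $x_{i'} = -x_i$. Combining this with the counting identity, I would argue that an element of non-prime order, or two distinct primes dividing $m$, forces every coordinate into the support of a single such $A$, i.e. forces $d+1 = 2k$, contradicting $k < (d+1)/2$. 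Equivalently, writing the height as a Fourier sum over the characters of $\Lambda_\Delta$, height-constancy becomes the condition that for every non-trivial character $\psi$ the total Fourier coefficient of the sawtooth functions $x \mapsto x_i$ equals $-k/m$; this makes the obstruction to composite order explicit. Once exponent $p$ is established, $m = p^r$ with $r = \dim_{\FF_p}\Lambda_\Delta$.

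With $\Lambda_\Delta$ elementary abelian it remains to identify the code and its generator matrix. The map $x \mapsto (px_0,\dots,px_d) \bmod p$ is $\FF_p$-linear and injective, since $p\{x_i+y_i\} \equiv px_i + py_i \pmod p$, so $p\Lambda_\Delta$ is a linear code of block length $d+1$ and dimension $r$, and each $q_i = p$ by the non-pyramid condition; double-counting supports then gives $(d+1)(p^r - p^{r-1}) = 2k(p^r-1)$, which is~(\ref{eqq}). Moreover every non-zero codeword has Hamming weight exactly $\sharp\supp(x) = 2k$, so $p\Lambda_\Delta$ is a constant-weight linear code. The final step is the classification of such codes: with generator-matrix columns $c_0,\dots,c_d \in \FF_p^r \setminus \{\mathbf 0\}$, constant weight is equivalent to every hyperplane of $\FF_p^r$ containing the same number $d+1-2k$ of the $c_j$; since the point--hyperplane incidence matrix of $PG(r-1,p)$ is non-singular over $\QQ$, the column-multiplicity function on projective points is forced to be constant, so the code is a replication of the $r$-dimensional simplex code. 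Reading off the replication number from weight $2k$ and block length $d+1$ recovers the stated form, namely $k/2^{r-2}$ copies of $A$ for $p=2$ and $k/p^{r-1}$ copies of $(A,-A)$ for $p>2$, with the $\pm$-pattern of representatives coming from the pairing $x_{i'} = -x_i$ found above, completing the proof.
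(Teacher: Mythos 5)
First, a caveat on the comparison itself: the paper contains no proof of Theorem~\ref{key2} --- it is quoted from \cite{BatyrevHof2} as an external input to the proof of Theorem~\ref{main} --- so your proposal can only be measured against its own internal soundness and against the strategy of the cited source, which (like yours) runs through heights and supports, reduction to an $\FF_p$-linear code, and the classification of constant-weight linear codes by replicated simplex codes. Several of your steps are correct and complete: the identity $\heit(x)+\heit(-x)=\sharp\supp(x)$ and the conclusion that every non-trivial element has support of size exactly $2k$; the observation that $\supp(jx)=\supp(x)$ forces each coordinate of $x$ on its support to have order exactly $\mathrm{ord}(x)$; the injective $\FF_p$-linear identification $x\mapsto px$ once prime exponent is known; the double-counting derivation of equation~(\ref{eqq}); and the endgame, where constant weight $2k$ plus non-singularity of the point--hyperplane incidence matrix of $PG(r-1,p)$ forces constant projective column multiplicity, i.e.\ a replicated simplex code (this is in effect Bonisoli's theorem, and your argument for it is sound).

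The genuine gap is the step you yourself flag as the main obstacle: you never prove that $\Lambda_\Delta$ has prime exponent. ``I would argue that an element of non-prime order \dots forces $d+1=2k$'' is a statement of intent, not an argument, and the two substitutes offered do no work: the counting identity $\sum_i(1-1/q_i)=2k(m-1)/m$ holds regardless of exponent, and the Fourier condition $\hat{\heit}(\psi)=-k/m$ is merely a restatement of height-constancy --- it is satisfied by cyclic $\Lambda_\Delta$ of arbitrary composite order when $d+1=2k$, so by itself it cannot ``make the obstruction to composite order explicit.'' The missing argument can in fact be assembled from facts you already established: (i) any two non-trivial elements have intersecting supports, since disjoint supports would give $\sharp\supp(x+y)=4k$; (ii) if the orders of $x$ and $y$ have different $q$-adic valuations for some prime $q$ (e.g.\ orders $p$ and $p'\neq p$, or $p^2$ and $p$), then on $\supp(x)\cap\supp(y)$ the coordinates of $x+y$ are non-zero with the larger $q$-valuation in their order, while on $\supp(y)\setminus\supp(x)$ they carry the smaller one; since all coordinates of the non-trivial element $x+y$ on its support must have order equal to $\mathrm{ord}(x+y)$, this forces $\supp(y)=\supp(x)$; (iii) hence a non-prime exponent forces every non-trivial element to have one and the same $2k$-element support, whereupon Proposition~\ref{pyramid} makes $\Delta$ a lattice pyramid unless $d+1=2k$, contradicting $k<(d+1)/2$. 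Without something like (i)--(iii) written out, the central assertion of the theorem is unestablished. A second, smaller gap: for $p>2$, writing the generator matrix in the paired form requires the multiset of columns to be closed under negation; Theorem~\ref{key1} applied to cyclic subgroups only yields that for each fixed codeword the multiset of its non-zero \emph{values} is negation-closed, and passing from this per-codeword statement to the per-column statement needs an additional argument (e.g.\ a Radon-transform/Fourier computation over $\FF_p^r$) that the proposal does not supply.
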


Let us note that also the converse of the theorem holds, so the linear codes defined by the generator matrices given in the theorem correspond to lattice simplices with 
$h^*$-polynomial $1+(p^r-1)t^k$ if the numerical condition (\ref{eqq}) holds, see also \cite[Proposition~5.2]{BatyrevHof2}.

\section{The classification of lattice polytopes with palindromic $h^*$-trinomials}

\subsection{The main result}

If $B$ is a matrix, we denote by $(B,0)$ the matrix with one additional zero column.


\begin{Theorem}\label{main}
Let $m \geq 3$ and $k \geq 2$ be integers and let $\Delta$ be a (necessarily empty) lattice simplex of dimension $d$ 
whose $h^*$-polynomial is $1+(m-2)t^k+t^{2k}$. Assume that $\Delta$ is not a lattice pyramid 
over any lower-dimensional simplex. Then the integers $k,m,d$ satisfy one of the following: 
\begin{itemize}
\item[(a)] $m \in \{3,4,6,8\}$ and $d = 3k-1$ or $m=4$ and $d=4k-1$; 
\item[(b)] $k=2^{\ell-3}a$, $m=2^\ell$ and $d=2^{\ell-1}a-1$, where $a \geq 1$ and $\ell \geq 3$ with $(a,\ell)\not=(1,3)$; 
\item[(c)] $k=3^{\ell-2}a$, $m=3^\ell$ and $d=3^{\ell-1}a-1$, where $a \geq 1$ and $\ell \geq 2$ with $(a,\ell)\not=(1,2)$. 
\end{itemize}
Moreover, in each case, a system of generators of the finite abelian group $\Lambda_\Delta$ 
is the set of row vectors of the matrix which can be written up to permutation of the columns as follows: 
\begin{itemize}
\item[(a)] 
\begin{align*}
&\left(1/3 \; 1/3 \; \cdots \; 1/3 \right) \in [0,1)^{1 \times 3k} \quad \text{{\em  in the case $m=3$}}; \\
&\left(\underbrace{1/4 \; \cdots \; 1/4}_{2k} \; \underbrace{1/2 \; \cdots \; 1/2}_k \right)
\in [0,1)^{1 \times 3k} \quad \text{{\em  in the case $m=4$ with $d=3k-1$}}; \\
&\left( \, 
\begin{array}{r@{}r@{}r r r}
\underbrace{
\begin{array}{rrr}
1/2 &\cdots &1/2 \\
1/2 &\cdots &1/2 
\end{array}
}_{2k} 
\; \underbrace{
\begin{array}{rrr}
0   &\cdots &0 \\
1/2 &\cdots &1/2 
\end{array}
}_{2k} 
\end{array}\, \right) \in [0,1)^{2 \times 4k} \quad \text{{\em  in the case $m=4$ with $d=4k-1$}}; \\
&\left(\underbrace{1/6 \; \cdots \; 1/6}_k \; \underbrace{1/3 \; \cdots \; 1/3}_k \; \underbrace{1/2 \; \cdots \; 1/2}_k \right)
\in [0,1)^{1 \times 3k} \quad \text{{\em  in the case $m=6$}}; \\
&\left( \, 
\begin{array}{r@{}r@{}r r r}
\underbrace{
\begin{array}{rrr}
1/2 &\cdots &1/2 \\
1/4 &\cdots &1/4 
\end{array}
}_k \; 
\underbrace{
\begin{array}{rrr}
0   &\cdots &0 \\
1/4 &\cdots &1/4
\end{array}
}_k \; 
\underbrace{
\begin{array}{rrr}
1/2 &\cdots &1/2 \\
1/2 &\cdots &1/2 
\end{array}
}_k 
\end{array}\, \right)
\in [0,1)^{2 \times 3k} \quad \text{{\em  in the case $m=8$}}. 
\end{align*}
\item[(b)] 
\begin{align*}
\begin{pmatrix}
(B_{\ell-1}^{(2)}, 0) &(B_{\ell-1}^{(2)}, 0) & \cdots &(B_{\ell-1}^{(2)},0) \\
1/2 & \cdots &\cdots &1/2
\end{pmatrix} \in [0,1)^{\ell \times 2^{\ell-1}a}, 
\end{align*}
where $A_{\ell-1}^{(2)} \in \{0,1\}^{(\ell-1) \times (2^{\ell-1}-1)}$ 
is the generator matrix of the simplex code over $\FF_2$ of dimension $(\ell-1)$ with block length $(2^{\ell-1}-1)$ 
and $B_{\ell-1}^{(2)} \in \left\{0,\frac{1}{2}\right\}^{(\ell-1) \times (2^{\ell-1}-1)}$ 
is the matrix all of whose entries are divided by $2$ from those of $A_{\ell-1}^{(2)}$, 
and where in above matrix $(B_{\ell-1}^{(2)},0) \in \left\{0,\frac{1}{2}\right\}^{(\ell-1) \times 2^{\ell-1}}$ is repeated $a$ times. 
\item[(c)] 
\begin{align*}
\begin{pmatrix}
(B_{\ell-1}^{(3)},-B_{\ell-1}^{(3)}, 0) &(B_{\ell-1}^{(3)},-B_{\ell-1}^{(3)}, 0) & \cdots &(B_{\ell-1}^{(3)},-B_{\ell-1}^{(3)}, 0) \\
1/3 & \cdots &\cdots &1/3
\end{pmatrix} \in [0,1)^{\ell \times 3^{\ell-1}a}, 
\end{align*}
where $A_{\ell-1}^{(3)} \in \{0,1,2\}^{(\ell-1) \times (3^{\ell-1}-1)/2}$ 
is the generator matrix of the simplex code over $\FF_3$ of dimension $(\ell-1)$ with block length $(3^{\ell-1}-1)/2$ and 
\linebreak $B_{\ell-1}^{(3)} \in \left\{0,\frac{1}{3},\frac{2}{3}\right\}^{(\ell-1) \times (3^{\ell-1}-1)/2}$ 
(resp. $-B_{\ell-1}^{(3)} \in \left\{0,\frac{2}{3},\frac{1}{3}\right\}^{(\ell-1) \times (3^{\ell-1}-1)/2}$) is the matrix all of whose entries are 
divided by $3$ from those of $A_{\ell-1}^{(3)}$ (resp. $-A_{\ell-1}^{(3)}$), 
and where in above matrix $(B_{\ell-1}^{(3)},-B_{\ell-1}^{(3)}, 0) \in \left\{0,\frac{1}{3},\frac{2}{3}\right\}^{(\ell-1) \times 3^{\ell-1}}$ is repeated $a$ times. 
\end{itemize}
\end{Theorem}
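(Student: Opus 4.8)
The plan is to work entirely with the finite abelian group $\Lambda_\Delta \subseteq [0,1)^{d+1}$ attached to $\Delta$ (for a fixed ordering of the vertices), using the dictionary $h_i^* = \sharp\{x \in \Lambda_\Delta : \heit(x) = i\}$ and Proposition~\ref{pyramid}. The hypothesis says that $\heit$ takes only the values $0,k,2k$ on $\Lambda_\Delta$, with a single element of height $0$, exactly $m-2$ elements of height $k$, and a \emph{unique} element $w$ of height $2k$; moreover $\sharp\Lambda_\Delta=m$, and since $\Delta$ is not a lattice pyramid every coordinate projection $\pi_i(\Lambda_\Delta)$ is a nontrivial cyclic group, of order $n_i\ge 2$. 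First I would record the identity obtained by summing heights over the whole group: as each coordinate is an equidistributed cyclic group, $\sum_{x\in\Lambda_\Delta}\heit(x)=\tfrac{m}{2}\sum_{i=0}^d(1-1/n_i)$, while the height distribution gives $\sum_x\heit(x)=km$. Hence $\sum_{i=0}^d(1-1/n_i)=2k$.

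The crucial first step is to pin down $w$ and the dimension. Since $w_i\le (n_i-1)/n_i$ for every $i$ (the largest element of the cyclic group of order $n_i$), the identity forces $2k=\heit(w)=\sum_i w_i\le\sum_i(1-1/n_i)=2k$, so equality holds and $w_i=(n_i-1)/n_i$ for all $i$; in particular $w$ has full support and $-w$ has $i$-th coordinate $1/n_i$. Using $\heit(x)+\heit(-x)=\supp(x)$ I get $d+1=\supp(w)=2k+\heit(-w)$, and since $\heit(-w)\in\{k,2k\}$ (it cannot be $0$) this yields the dichotomy $d=4k-1$ with $2w=0$ (\textbf{Case A}) or $d=3k-1$ with $\heit(-w)=k$ (\textbf{Case B}). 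In Case A the equalities $w_i=1/2$ force every $n_i=2$, so $\Lambda_\Delta$ is a binary linear code of length $4k$ containing the all-ones word $w$, all of whose remaining nonzero words have weight $2k$. In Case B the element $u:=-w=(1/n_0,\ldots,1/n_d)$ has height $k$, so $\sum_i 1/n_i=k$.

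Next I would feed each case into the Batyrev--Hofscheier classification. In Case A, choosing any $\FF_2$-hyperplane complement $\Lambda'$ of $\langle w\rangle$ produces a \emph{constant-weight} code (weight $2k$, since every nonzero word other than $w$ has that weight); deleting its zero columns gives a non-pyramid simplex with binomial $h^*$-polynomial $1+(\sharp\Lambda'-1)t^k$, to which Theorems~\ref{key1} and~\ref{key2} apply and identify $\Lambda'$ as a replicated simplex code over $\FF_2$. Reinserting the zero columns and the generator $w$ reconstructs the matrix of part~(b), the boundary value $\ell=2$ (that is, $(\ZZ/2\ZZ)^2$) appearing separately as $m=4,\,d=4k-1$ in part~(a). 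The "pure" subcase of Case B, in which all $n_i$ are equal, is handled the same way: then $\sum_i 1/n_i=k=(d+1)/3$ forces $n_i=3$, so $\Lambda_\Delta$ is an elementary abelian $3$-group, i.e. a linear code over $\FF_3$ containing the all-$(1/3)$ word $u$; splitting off $\langle u\rangle$ (whose complement has all nonzero words of height $k$, hence is binomial) and applying Theorem~\ref{key2} produces the $\FF_3$ family of part~(c), with the degenerate case $\ell=1$ giving $m=3$ in part~(a).

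The main obstacle is Case B when the $n_i$ are \emph{not} all equal, so that $\Lambda_\Delta$ is no longer a $p$-group and the code-theoretic input does not apply directly. Here I would exploit the strong arithmetic constraint $\sum_{i=0}^{3k-1}1/n_i=k$ together with the multiplicative structure of the height-$k$ elements: for two such elements $x,y$ the height $\heit(x+y)=2k-(\text{number of carries})$ must again lie in $\{0,k,2k\}$, which severely restricts the possible orders $n_i$. The goal is to show that only finitely many mixed configurations survive, namely those giving $m\in\{4,6,8\}$ with $d=3k-1$. Decomposing such $\Lambda_\Delta$ into its Sylow subgroups, each of which carries a binomial $h^*$-polynomial and is therefore governed by Theorems~\ref{key1} and~\ref{key2}, and then recombining, should both exhibit the explicit generator matrices listed in part~(a) and rule out every other value of $m$. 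Verifying completeness of this finite list, and checking that each claimed triple $(k,m,d)$ is actually realized by the stated matrix, is the most delicate and computation-heavy part of the argument.
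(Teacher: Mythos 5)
Your setup is correct, and in places cleaner than the paper's own argument: the averaging identity $\sum_{i=0}^d(1-1/n_i)=2k$, the forced equality $w_i=(n_i-1)/n_i$ for the unique height-$2k$ element $w$, and the resulting dichotomy $d=4k-1$ (with $2w={\bf 0}$) versus $d=3k-1$ (with $\sum_i 1/n_i=k$) are all valid; the paper reaches the same dichotomy only inside the longer case analysis of its Proposition~\ref{order}. Your Case A and the pure subcase of Case B (elementary abelian $2$- and $3$-groups: split off $\langle w\rangle$, delete zero columns, quote Theorems~\ref{key1} and~\ref{key2}, reinsert) also match the paper's cases (i) and (iv) and the corresponding rank-two cases in substance. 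The genuine gap is the mixed subcase of Case B, which is exactly where the content of the theorem lives: it must produce $m\in\{4,6,8\}$ with $d=3k-1$ and eliminate everything else, and you leave it as an acknowledged sketch. Concretely, you never bound the order of $w$. The paper does this first, via Lemma~\ref{lem1}: for $j$ coprime to the order $n$ of $w$ one has $\supp(jw)=\supp(w)$, which in Case B is all of the $3k$ coordinates, so if $jw\notin\{w,-w\}$ then $k+k=\heit(jw)+\heit((n-j)w)=3k$, a contradiction; hence $\phi(n)\le 2$ and $n\in\{2,3,4,6\}$. Without this step your arithmetic constraint is far too weak: for $k$ even, the profile of $3k/2$ coordinates of order $2$, $k$ of order $5$ and $k/2$ of order $10$ satisfies $\sum_i 1/n_i=k$, and your remark about carries is not an argument that excludes it.

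Moreover, the specific tool you propose for the mixed case is flawed. Mixed $n_i$ does not imply that $\Lambda_\Delta$ fails to be a $p$-group (orders $2$ and $4$ mixed give a $2$-group), and in the very cases you are trying to establish, namely $m=4$ with $d=3k-1$ (where $\Lambda_\Delta\cong\ZZ/4\ZZ$) and $m=8$ (where $\Lambda_\Delta\cong\ZZ/2\ZZ\times\ZZ/4\ZZ$), the unique Sylow subgroup is the whole group, contains $w$, and carries the trinomial, not a binomial, $h^*$-polynomial; so Theorems~\ref{key1} and~\ref{key2} cannot be applied to the Sylow pieces as you claim. What the paper does instead, once $n\in\{2,3,4,6\}$ is known, is to compute the heights $\heit(jw)=k$ of all multiples of $w$ to pin down its exact coordinate profile (e.g.\ for $n=6$: exactly $k$ coordinates each equal to $5/6$, $2/3$, $1/2$, and no zero coordinates by the non-pyramid hypothesis together with a support-counting argument), and then to play subgroups $G$ avoiding $w$ --- whose structure is given by Theorems~\ref{key1} and~\ref{key2} --- against $w$ by counting supports of sums, which rules out $(\ZZ/2\ZZ)^{\ell-1}\times\ZZ/4\ZZ$, $(\ZZ/2\ZZ)^{\ell-1}\times\ZZ/6\ZZ$ and $(\ZZ/3\ZZ)^{\ell-1}\times\ZZ/6\ZZ$ for $\ell\ge3$, and order $6$ when there are exactly two invariant factors, leaving precisely the groups listed in part (a). This whole chain --- order restriction, profile of $w$, and the eliminations --- is missing from your proposal and cannot be replaced by the Sylow argument as stated.
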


\begin{Example}
In case (b) for $k=2$ and $\ell=3$ the rows of the following matrix generate $\Lambda_\Delta$ of size $m=8$:

\begin{align*}
\begin{pmatrix}
\frac{1}{2} & 0 & \frac{1}{2} & 0 & \frac{1}{2} & 0 & \frac{1}{2} & 0\\
0 & \frac{1}{2} & \frac{1}{2} & 0 & 0 & \frac{1}{2} & \frac{1}{2} & 0\\
\frac{1}{2} & \frac{1}{2} & \frac{1}{2} & \frac{1}{2} & \frac{1}{2} & \frac{1}{2} & \frac{1}{2} & \frac{1}{2}
\end{pmatrix}
\end{align*}

In case (c) for $k=2$ and $\ell=2$ the rows of the following matrix generate $\Lambda_\Delta$ of size $m=9$:

\begin{align*}
\begin{pmatrix}
\frac{1}{3} & \frac{2}{3} & 0 & \frac{1}{3} & \frac{2}{3} & 0\\
\frac{1}{3} & \frac{1}{3} & \frac{1}{3} & \frac{1}{3} & \frac{1}{3} & \frac{1}{3}
\end{pmatrix}
\end{align*}
\end{Example}

\subsection{Preliminary results}

For the proof of Theorem \ref{main}, we prepare some lemmas. Throughout this section, let $\Delta$ be a lattice simplex of dimension $d$ 
whose $h^*$-polynomial equals $1+(m-2)t^k+t^{2k}$ with $k \geq 2$ and $m \geq 3$. Note that $\Delta$ is necessarily empty.

For $x=(x_0,\ldots,x_d) \in \Lambda_\Delta$, let $\supp(x)=\{i : x_i \not=0\}$. The following equality will be used throughout:
\[\sharp\supp(x) = \heit(x)+\heit(-x).\]

\begin{Lemma}\label{lem1}
Let $x \in \Lambda_\Delta$ be an element whose order is $n$ and let $1 \leq j \leq n-1$ be coprime to $n$. Then 
we have $\supp(x)=\supp(j x)$. Hence, 
\[\heit(x)+\heit((n-1)x)=\heit(jx)+\heit((n-j)x).\]
\end{Lemma}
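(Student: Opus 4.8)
The plan is to prove the claim purely at the level of the group $\Lambda_\Delta \subseteq (\RR/\ZZ)^{d+1}$, using the fact that $\heit$ of an element records in which ``slice'' of the $h^*$-decomposition it lives, together with the given identity $\sharp\supp(x) = \heit(x) + \heit(-x)$. Fix $x \in \Lambda_\Delta$ of order $n$ and let $j$ be coprime to $n$ with $1 \le j \le n-1$. The central observation is that the coordinate-wise structure of $\Lambda_\Delta$ reduces the whole statement to a one-coordinate fact: for a single coordinate $x_i \in \RR/\ZZ$, the support condition $x_i \ne 0$ is equivalent to $jx_i \ne 0$ whenever $\gcd(j,n)=1$.

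First I would verify this one-coordinate fact. Since $x$ has order $n$, each coordinate $x_i$ is an element of $\RR/\ZZ$ whose order divides $n$; because $\Lambda_\Delta$ is finite, $x_i$ can be written as $c_i/n$ for some integer $c_i$ with $0 \le c_i < n$. Then $x_i = 0$ iff $c_i \equiv 0 \pmod n$. Multiplication by $j$ sends $x_i$ to $jx_i = jc_i/n \bmod 1$, and since $\gcd(j,n)=1$ the map $c \mapsto jc \bmod n$ is a bijection of $\ZZ/n\ZZ$ fixing $0$. Hence $jc_i \equiv 0 \pmod n$ iff $c_i \equiv 0 \pmod n$, i.e. $(jx)_i = 0$ iff $x_i = 0$. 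Running over all $i \in \{0,\ldots,d\}$ gives $\supp(x) = \supp(jx)$ immediately, which is the first assertion of the lemma.

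For the ``Hence'' clause I would apply the support identity to both $x$ and to $jx$. Concretely, $\sharp\supp(x) = \heit(x) + \heit(-x) = \heit(x) + \heit((n-1)x)$, since $-x = (n-1)x$ in a group where $x$ has order $n$. Likewise $\sharp\supp(jx) = \heit(jx) + \heit(-jx) = \heit(jx) + \heit((n-j)x)$, using $-jx = (n-j)x$. Because the first part gives $\supp(x) = \supp(jx)$, the two left-hand sides agree, and equating the right-hand sides yields exactly
\[
\heit(x) + \heit((n-1)x) = \heit(jx) + \heit((n-j)x),
\]
which is the desired equation. I should note that $j$ coprime to $n$ guarantees $jx$ also has order $n$, so that $(n-j)x = -jx$ is the genuine inverse, and that the relevant elements all lie in $\Lambda_\Delta$ since it is a group.

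The proof is essentially routine once the coordinate description is in place, so there is no genuine obstacle. The only point demanding a little care is the reduction to coordinates: one must be sure that ``$x_i = 0$ in $\RR/\ZZ$'' is equivalent to ``$n x_i \in n\ZZ$ with $x_i = c_i/n$'', i.e. that every coordinate of an order-$n$ element is an $n$-th root in $\RR/\ZZ$, which follows because $nx = \mathbf{0}$ forces $n x_i \in \ZZ$ for each $i$. With that pinned down, the bijectivity of multiplication by a coprime residue does all the work.
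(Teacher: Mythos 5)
Your proof is correct and follows essentially the same route as the paper: a coordinate-wise coprimality argument (the paper writes each nonzero coordinate in lowest terms $a/b$ with $b \mid n$ and deduces $b \nmid ja$, whereas you write all coordinates over the common denominator $n$ and invoke bijectivity of multiplication by $j$ on $\ZZ/n\ZZ$, which conveniently gives both inclusions at once), followed by applying the identity $\sharp\supp(x)=\heit(x)+\heit(-x)$ to $x$ and $jx$ with $-x=(n-1)x$ and $-jx=(n-j)x$. No gaps; the differences from the paper's argument are cosmetic.
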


\begin{proof}
Let $i \in \supp(x)$, $x_i = \frac{a}{b} \not=0$ with $\gcd(a,b)=1$. By the definition of $n$, we observe that $b$ divides $a n$, so also $n$. Hence, $\gcd(b,j)=1$. Therefore, 
$b$ does not divide $ja$, so $i \in \supp(jx)$.
\end{proof}

\begin{Lemma}\label{x}
Let $x \in \Lambda_\Delta$ be the unique element with $\heit(x)=2k$. Then, 
\begin{itemize}
\item[(a)] for any $y = (y_0,\ldots,y_d) \in \Lambda_\Delta \setminus \{{\bf 0},\pm x\}$, 
we have $\sharp\supp(y)=2k$; 
\item[(b)] there is no integer $j$ and $y \in \Lambda_\Delta \setminus \{{\bf 0},\pm x\}$ 
such that $x=jy$. 
\end{itemize}
\end{Lemma}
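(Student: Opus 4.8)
The plan is to read off everything from the $h^*$-data via the height identity $h_i^* = \sharp\{x \in \Lambda_\Delta : \heit(x) = i\}$. Since the $h^*$-polynomial equals $1 + (m-2)t^k + t^{2k}$, the group $\Lambda_\Delta$ has exactly one element of height $0$ (namely ${\bf 0}$), exactly $m-2$ elements of height $k$, and exactly one element of height $2k$ (the element $x$), and no other heights occur. In particular, every nonzero element of $\Lambda_\Delta$ has height either $k$ or $2k$, and $x$ is the unique element of height $2k$. These two facts are all I will need.

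For part (a) I would argue directly. Take $y \in \Lambda_\Delta \setminus \{{\bf 0}, \pm x\}$. Since $y \neq {\bf 0}$ and $y \neq x$, its height is neither $0$ nor $2k$, hence $\heit(y) = k$. The same reasoning applies to $-y$: it is nonzero, and it differs from $x$ precisely because $y \neq -x$, so $\heit(-y) = k$ as well. The support identity $\sharp\supp(y) = \heit(y) + \heit(-y)$ then gives $\sharp\supp(y) = 2k$, as claimed.

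For part (b) I would first record the size of $\supp(x)$. Since $x \neq {\bf 0}$, the element $-x$ is nonzero, and it can have height $2k$ only if $-x = x$, by uniqueness of $x$. Thus $\heit(-x) \in \{k, 2k\}$, and the support identity yields $\sharp\supp(x) = 2k + \heit(-x) \in \{3k, 4k\}$; in either case $\sharp\supp(x) > 2k$ since $k \geq 2$. Now suppose for contradiction that $x = jy$ for some integer $j$ and some $y \in \Lambda_\Delta \setminus \{{\bf 0}, \pm x\}$. Passing to an integer multiple never enlarges the support (if $y_i = 0$ then the $i$-th coordinate of $jy$ is $\{j y_i\} = 0$, and $\supp(-y) = \supp(y)$), so $\supp(x) = \supp(jy) \subseteq \supp(y)$ and hence $\sharp\supp(x) \leq \sharp\supp(y)$. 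But part (a) gives $\sharp\supp(y) = 2k < \sharp\supp(x)$, a contradiction.

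The argument is essentially bookkeeping with heights, so I do not expect a serious obstacle; the only points requiring care are (i) checking that \emph{both} $y$ and $-y$ avoid the exceptional set $\{{\bf 0}, x\}$ in part (a) — this is exactly why the hypothesis excludes $\pm x$ and not merely $x$ — and (ii) the case distinction in computing $\sharp\supp(x)$, where one must treat the possibility that $x$ has order two (so $-x = x$ and $\sharp\supp(x) = 4k$) separately from the generic case $\heit(-x) = k$. Both become immediate once the height multiplicities $1, m-2, 1$ are pinned down from the $h^*$-polynomial.
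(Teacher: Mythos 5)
Your proof is correct and takes essentially the same route as the paper: part (a) is word-for-word the paper's argument, and part (b) likewise combines the fact that integer multiples never enlarge the support with the inequality $\sharp\supp(x) > 2k$. The only cosmetic difference is how that inequality is justified --- the paper observes directly that $\sharp\supp(x) > \heit(x) = 2k$ because every coordinate lies in $[0,1)$, whereas you derive $\sharp\supp(x) = 2k + \heit(-x) \in \{3k,4k\}$ from the support identity with a case split on whether $-x = x$; both are valid.
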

\begin{proof}
(a) Since $\heit(y)=k$, $\heit(-y)=k$, and $y \not=x\not=-y$, we have 
$$2k=\heit(y)+\heit(-y)=\sharp\supp(y).$$
(b) For any integer $j$ and $y \in \Lambda_\Delta \setminus \{{\bf 0},\pm x\}$, 
since $\sharp \supp(y) = 2k$ by (a), we have $\sharp \supp(jy) \leq 2k$. 
However, by $\sharp \supp(x)>\heit(x) = 2k$, $x=jy$ never happens. 
\end{proof}

The following proposition is crucial for the proof of Theorem \ref{main}.

\begin{Proposition}\label{order}
Let $\Delta$ be a lattice simplex which is not a lattice pyramid 
whose $h^*$-polynomial is $1+(m-2)t^k+t^{2k}$ with $m \geq 3$ and $k \geq 2$. 
Let $x \in \Lambda_\Delta$ be the unique element with $\heit(x)=2k$. 
Then the order of $x$ must be $2$ or $3$ or $4$ or $6$, and up to permutation of coordinates 
$x$ is given as follows:
\begin{itemize}
\item $x=(1/2,\ldots,1/2) \in [0,1)^{4k}$ when its order is $2$; 
\item $x=(2/3,\ldots,2/3) \in [0,1)^{3k}$ when its order is $3$; 
\item $x=(\underbrace{3/4,\ldots,3/4}_{2k},\underbrace{1/2,\ldots,1/2}_k) \in [0,1)^{3k}$ when its order is $4$; 
\item $x=(\underbrace{5/6,\ldots,5/6}_k,\underbrace{2/3,\ldots,2/3}_k,\underbrace{1/2,\ldots,1/2}_k) \in [0,1)^{3k}$ 
when its order is $6$. 
\end{itemize}
In particular, the dimension of $\Delta$ is $4k-1$ if the order of $x$ is 2 and $3k-1$ otherwise.
\end{Proposition}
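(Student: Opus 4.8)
The plan is to split the argument into three parts: bounding the order $n$ of $x$, pinning down the coordinate values of $x$, and finally computing the dimension. First I would record the basic data. Since the $h^*$-polynomial is $1+(m-2)t^k+t^{2k}$, we have $\sharp\Lambda_\Delta=m$, the element $\mathbf{0}$ has height $0$, the unique height-$2k$ element is $x$, and every remaining nonzero element has height $k$. Consequently, for any multiple $jx$ with $j\not\equiv 0,1\pmod{n}$ we have $\heit(jx)=k$, while $\heit(x)=2k$. I would also note the identity $\sharp\supp(x)=\heit(x)+\heit(-x)$, which equals $4k$ if $n=2$ (as then $-x=x$) and $3k$ if $n>2$ (as then $-x$ is a height-$k$ element).

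To bound $n$, I would invoke Lemma~\ref{x}(b). For any $j$ coprime to $n$, choose $j'$ with $jj'\equiv 1\pmod n$; then $j'(jx)=x$, so Lemma~\ref{x}(b) forces $jx\in\{\mathbf{0},\pm x\}$, and since $jx\ne\mathbf{0}$ we obtain $jx=\pm x$, i.e. $j\equiv\pm1\pmod n$. Thus every unit modulo $n$ equals $\pm1$, so the unit group has order at most $2$, i.e. $\varphi(n)\le 2$; recalling $n\ge 2$ this gives $n\in\{2,3,4,6\}$.

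Next, for each admissible $n$ I would determine the coordinate values of $x$ by elementary counting. Writing $a_v$ for the number of coordinates of $x$ equal to $v$ (each $v$ a multiple of $1/n$), the heights of the multiples $x,2x,\ldots,(n-1)x$ — all forced to equal $k$ except for $x$ itself, which has height $2k$ — translate into a small linear system in the $a_v$, supplemented by $\sharp\supp(x)=\sum_v a_v$. Solving these systems yields exactly the stated vectors; for instance, when $n=3$ the two height equations already force all nonzero coordinates to be $2/3$, while for $n=6$ one uses the heights of $x,2x,3x,4x,5x$ simultaneously. In particular this step establishes that all nonzero coordinates take the claimed values, and hence that $\sharp\supp(x)=4k$ when $n=2$ and $\sharp\supp(x)=3k$ otherwise.

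Finally, for the dimension I would use a global counting identity. Summing heights over $\Lambda_\Delta$ gives $\sum_{y\in\Lambda_\Delta}\heit(y)=k(m-2)+2k=km$. On the other hand, for each coordinate $i$ the projection $\Lambda_\Delta\to\RR/\ZZ$ is a homomorphism onto a cyclic group of some order $n_i$, and computing its fiber sums gives $\sum_{y}\heit(y)=\tfrac{m}{2}\sum_i(1-1/n_i)$, so that $\sum_i(1-1/n_i)=2k$. Since $\Delta$ is not a lattice pyramid, Proposition~\ref{pyramid} guarantees $n_i\ge 2$ for every $i$, whence each coordinate outside $\supp(x)$ contributes at least $1/2>0$ to this sum. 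As $n_i\ge\operatorname{ord}(x_i)$ on $\supp(x)$, the coordinate values found above make $\sum_{i\in\supp(x)}(1-1/n_i)$ already equal to $2k$ in every case (for example $3k\cdot(1-1/3)=2k$ when $n=3$, and $2k\cdot\tfrac34+k\cdot\tfrac12=2k$ when $n=4$). Therefore no coordinate can lie outside $\supp(x)$, giving $d+1=\sharp\supp(x)$ and hence $d=4k-1$ when $n=2$ and $d=3k-1$ otherwise. The main obstacle is the casework of the third step — in particular setting up and solving the order-$6$ system — though each case reduces to routine linear algebra.
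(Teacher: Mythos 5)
Your proposal is correct, but it reaches the conclusion by a genuinely different route in two of its three steps. For the order bound, the paper argues via Lemma~\ref{lem1}: choosing a unit $j$ with $j\not\equiv\pm1\pmod{n}$, the support identity gives $3k=\heit(x)+\heit(-x)=\heit(jx)+\heit(-jx)=2k$, a contradiction; you instead invert $j$ and apply Lemma~\ref{x}(b) to force $jx=\pm x$, hence $\varphi(n)\le 2$ --- equally short and valid. The middle step (the linear systems in $q_j=\sharp\{i: x_i=j/n\}$ extracted from the heights $\heit(jx)$, $j=1,\ldots,n-1$) coincides with the paper's computation, and those systems do have the unique solutions you claim. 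The substantial difference is the last step. The paper proves, case by case in $n\in\{2,3,4,6\}$ and with increasingly intricate counting, that $\supp(y)\subseteq\supp(x)$ for every $y\in\Lambda_\Delta$ by analyzing $\sharp\supp(jx+y)=2k$, and only then invokes Proposition~\ref{pyramid} to rule out zero coordinates of $x$; this occupies most of the paper's proof. Your replacement is an averaging identity: summing heights in two ways gives $\sum_{i=0}^{d}(1-1/n_i)=\frac{2}{m}\sum_{y\in\Lambda_\Delta}\heit(y)=2k$, where $n_i$ is the order of the (cyclic) image of the $i$-th coordinate projection and the fiber count $m/n_i$ is exact because each projection is a group homomorphism; since $n_i\ge\operatorname{ord}(x_i)$ on $\supp(x)$ already makes $\sum_{i\in\supp(x)}(1-1/n_i)\ge 2k$ in all four cases, and Proposition~\ref{pyramid} gives $1-1/n_i\ge 1/2$ off $\supp(x)$, there is no room for coordinates outside $\supp(x)$. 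This argument is correct, treats the four orders uniformly, and isolates exactly where the non-pyramid hypothesis enters; what it forgoes is the by-product $\supp(y)\subseteq\supp(x)$ for all $y$, which the paper's longer argument establishes en route and whose style of support counting reappears in the proof of Theorem~\ref{main}. Either way, your outline would expand into a complete and somewhat shorter proof of the proposition.
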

%
%
%
%
%
%

\begin{proof}
Let $m' \geq 2$ be the order of $x$. Suppose that $m'=5$ or $m' \geq 7$. Then $\phi(m') > 2$, where $\phi$ is the Eulerian $\phi$-function. 
In particular, there exists an integer $2 \leq j \leq m'-2$ which is coprime to $m'$. By Lemma \ref{lem1} and $-x \not= x$, we obtain
$$3k=\heit(x)+\heit((m'-1)x) = \heit(j x)+\heit((m'-j)x) = 2k,$$ implying that $k=0$, a contradiction. 
Thus, $m' \leq 6$ and $m' \not=5$. Hence, $m' \in \{2,3,4,6\}$.

\noindent
\underline{$m'=2$}: Then each $x_i$ is $1/2$ or 0. From $\heit(x)=2k$, 
we have $x=(\underbrace{1/2,\ldots,1/2}_{4k},\underbrace{0,\ldots,0}_s)$ after reordering. 
Fix $y \in \Lambda_\Delta \setminus \{{\bf 0},x\}$ and let $q=\sharp(\supp(y) \setminus \supp(x))$. 
Since $\sharp \supp(x+y)=2k$ by Lemma \ref{x} (a), we have $2k=\sharp \supp(x+y) = 4k-k'+q$, 
where $k' = \sharp \{ i \in \supp(x) \cap \supp(y) : y_i=1/2\}$. Hence, $k'-q = 2k$. On the other hand, 
since $\sharp \supp(y)=2k$, we also have $k'+q \leq 2k$. Thus, $q \leq 0$, i.e., $q=0$. 
This means that $\supp(y) \subset \supp(x)$. 
Hence, if $s > 0$, then $\Delta$ is a lattice pyramid by Proposition \ref{pyramid}, a contradiction. 
Thus $s=0$ and we conclude that $x=(1/2,\ldots,1/2) \in [0,1)^{4k}$. 

\noindent
\underline{$m'=3$}: Then each $x_i$ is $1/3$ or $2/3$ or 0. It follows from $\heit(x)=2k$ and $\heit(-x)=k$ that 
$x=(\underbrace{2/3,\ldots,2/3}_{3k},\underbrace{0,\ldots,0}_s)$ after reordering. 
Fix $y \in \Lambda_\Delta \setminus \{{\bf 0},\pm x\}$ and let $q=\sharp(\supp(y) \setminus \supp(x))$. 
Since $\sharp \supp(x+y)=2k$, we have $2k=\sharp \supp(x+y)=3k-k_1+q$, 
where $k_1=\sharp \{ i \in \supp(x) \cap \supp(y) : y_i=1/3\}$. Hence, $k_1-q=k$. Similarly, 
since $\sharp \supp(2x+y)=2k$, we have $2k=\sharp \supp(2x+y)=3k-k_2+q$, 
where $k_2=\sharp \{ i \in \supp(x) \cap \supp(y) : y_i=2/3\}$. Hence, $k_2-q=k$. On the other hand, 
since $\sharp \supp(y)=2k$, we also have $k_1+k_2+q \leq 2k=k_1+k_2-2q$. Thus, $q \leq 0$, i.e., $q=0$, 
implying that $s=0$. Hence we conclude that $x=(2/3,\ldots,2/3) \in [0,1)^{3k}$. 

\noindent
\underline{$m'=4$}: Then each $x_i$ is $1/4$ or $1/2$ or $3/4$ or 0. For $j=1,2,3$, let $q_j=\sharp\{i : x_i=j/4\}$. 
Since $\heit(x)=(q_1+2q_2+3q_3)/4=2k$, $\heit(2x)=(q_1+q_3)/2=k$ and $\heit(3x)=(3q_1+2q_2+q_3)/4=k$, 
we obtain $q_1=0$, $q_2=k$ and $q_3=2k$, that is, 
$$x=(\underbrace{3/4,\ldots,3/4}_{2k},\underbrace{1/2,\ldots,1/2}_k, \underbrace{0,\ldots,0}_s)$$ after reordering. 
Fix $y \in \Lambda_\Delta \setminus \{jx : j=0,1,2,3\}$ and let $q=\sharp(\supp(y) \setminus \supp(x))$. 
Let $k_j=\sharp \{i \in \supp(x) \cap \supp(y) : x_i=3/4, y_i=j/4\}$ for $j=1,2,3$ and let 
$k'=\sharp \{i \in \supp(x) \cap \supp(y) : x_i=y_i=1/2\}$. 
Since $\sharp \supp(x+y)=\sharp \supp(2x+y)=\sharp \supp(3x+y)=2k$, we have the following: 
\begin{itemize}
\item $2k = \sharp \supp(x+y)=2k-k_1+k-k'+q$, i.e., $k+q=k_1+k'$; 
\item $2k = \sharp \supp(2x+y) \geq 2k-k_2+k'+q$, i.e., $k'+q \leq k_2$; 
\item $2k = \sharp \supp(3x+y)=2k-k_3+k-k'+q$, i.e., $k+q=k_3+k'$. 
\end{itemize}
In particular, we have $2k+3q \leq k_1+k_2+k_3+k'$. 
On the other hand, since $\sharp \supp(y) = 2k$, we have $k_1+k_2+k_3+k'+q \leq 2k$. Thus we obtain 
$$2k+4q \leq k_1+k_2+k_3+k'+q \leq 2k.$$
This means $q=0$, and thus, $s=0$. 
Hence we conclude that $x=(\underbrace{3/4,\ldots,3/4}_{2k},\underbrace{1/2,\ldots,1/2}_k) \in [0,1)^{3k}$ after reordering. 

\noindent
\underline{$m'=6$}: Then each $x_i$ is $1/6,1/3,1/2,2/3,5/6$ or 0. For $j=1,2,3,4,5$, let $q_j=\sharp\{i : x_i=j/6\}$. Then 
\begin{align*}
&\heit(x)=(q_1+2q_2+3q_3+4q_4+5q_5)/6=2k, \\
&\heit(2x)=(q_1+2q_2+q_4+2q_5)/3=k, \\ 
&\heit(3x)=(q_1+q_3+q_5)/2=k, \\
&\heit(4x)=(2q_1+q_2+2q_4+q_5)/3=k \text{ and } \\ 
&\heit(5x)=(5q_1+4q_2+3q_3+2q_4+q_5)/6=k. 
\end{align*}
Thus $q_1=q_2=0$ and $q_3=q_4=q_5=k$, that is, 
$$x=(\underbrace{5/6,\ldots,5/6}_k,\underbrace{2/3,\ldots,2/3}_k, \underbrace{1/2,\ldots,1/2}_k,\underbrace{0,\ldots,0}_s)$$ 
after reordering. 
Fix $y \in \Lambda_\Delta \setminus \{j x : j=0,1,2,3,4,5\}$ and let $q=\sharp(\supp(y) \setminus \supp(x))$. Let 
\begin{align*}
&k_j=\sharp \{i \in \supp(x) \cap \supp(y) : x_i=5/6, y_i=j/6\} \text{ for } j=1,2,3,4,5, \\
&k_j'=\sharp \{i \in \supp(x) \cap \supp(y) : x_i=2/3, y_i=j/3\} \text{ for } j=1,2, \\
&k''=\sharp \{i \in \supp(x) \cap \supp(y) : x_i=y_i=1/2\}. 
\end{align*}
Since $\sharp \supp(jx+y)=2k$ for $j=1,2,3,4,5$, 
we have the following: 
\begin{itemize}
\item $2k = \sharp \supp(x+y)=k-k_1+k-k_1'+k-k''+q$, i.e., $k+q=k_1+k_1'+k''$; 
\item $2k = \sharp \supp(2x+y) \geq k-k_2+k-k_2'+k''+q$, i.e., $k''+q \leq k_2+k_2'$; 
\item $2k = \sharp \supp(3x+y) \geq k-k_3+k_1'+k_2'+k-k''+q$, i.e., $k_1'+k_2' + q \leq k_3+k''$; 
\item $2k = \sharp \supp(4x+y) \geq k-k_4+k-k_1'+k''+q$, i.e., $k''+q \leq k_4+k_1'$; 
\item $2k = \sharp \supp(5x+y)=k-k_5+k-k_2'+k-k''+q$, i.e., $k+q=k_5+k_2'+k''$. 
\end{itemize}
By summing up these five inequalities, we have $2k+5q \leq k_1+\cdots+k_5+k_1'+k_2'+k''$. 
On the other hand, since $\sharp \supp(y) = 2k$, we have $k_1+\cdots+k_5+k_1'+k_2'+k''+q \leq 2k$. Thus we obtain 
$$2k+6q \leq k_1+\cdots+k_5+k_1'+k_2'+k'' + q \leq 2k.$$
This means $q=0$, and thus, $s=0$. Hence we conclude that 
$$x=(\underbrace{5/6,\ldots,5/6}_k,\underbrace{2/3,\ldots,2/3}_k,\underbrace{1/2,\ldots,1/2}_k) \in [0,1)^{3k}$$ after reordering. 
\end{proof}

As a corollary of this proposition, we obtain the following: 
\begin{Corollary}\label{ccc}
Let $m \geq 3$ and $k \geq 2$ be integers. 
Let $\Delta$ be a lattice polytope with $h_\Delta^*(t)=1+(m-2)t^k+t^{2k}$. 
Assume that $\Lambda_\Delta$ is a cyclic group. Then $m$ must be $3$ or $4$ or $6$. 
Moreover, the generator of $\Lambda_\Delta$ looks as follows: 
\begin{itemize}
\item $(1/3,\ldots,1/3) \in [0,1)^{3k}$ or its inverse when $m=3$; 
\item $(\underbrace{1/4,\ldots,1/4}_{2k},\underbrace{1/2,\ldots,1/2}_k) \in [0,1)^{3k}$ 
or its inverse when $m=4$; 
\item $(\underbrace{1/6,\ldots,1/6}_{k},\underbrace{1/3,\ldots,1/3}_k,\underbrace{1/2,\ldots,1/2}_k) \in [0,1)^{3k}$ 
or its inverse when $m=6$. 
\end{itemize}
\end{Corollary}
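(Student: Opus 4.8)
The plan is to leverage the structural information about the unique ``top'' element $x$ with $\heit(x)=2k$ supplied by Proposition~\ref{order} and Lemma~\ref{x}, and to observe that cyclicity forces this element to essentially generate the whole group. First I would record the basic numerics: since $h^*_\Delta(t)=1+(m-2)t^k+t^{2k}$, we have $\vol(\Delta)=1+(m-2)+1=m$, and by the identity $\sharp\Lambda_\Delta=\vol(\Delta)$ the group $\Lambda_\Delta$ is cyclic of order exactly $m$; let $g$ be a generator. Note that the very existence of $\Lambda_\Delta$ forces $\Delta$ to be a lattice simplex.

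To be able to invoke Proposition~\ref{order} I would first reduce to the case that $\Delta$ is not a lattice pyramid. If it were, then by Proposition~\ref{pyramid} some coordinate vanishes identically on $\Lambda_\Delta$; deleting that coordinate changes neither the $h^*$-polynomial (pyramids preserve it), nor $\Lambda_\Delta$ as an abstract group (the coordinate-forgetting map is injective on $\Lambda_\Delta$ and preserves $\heit$), nor its cyclicity, nor the nonzero entries of $g$. Iterating, we reach a non-pyramidal simplex to which Proposition~\ref{order} applies. This reduction is precisely the reason the claimed generators are displayed in $[0,1)^{3k}$ with no zero coordinate.

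The heart of the argument is one short step. Let $x\in\Lambda_\Delta$ be the unique element with $\heit(x)=2k$; by Proposition~\ref{order} its order $m'$ lies in $\{2,3,4,6\}$ and $x$ has one of the four explicit shapes listed there. Since $\Lambda_\Delta=\langle g\rangle$, I can write $x=jg$ for some integer $j$. Now Lemma~\ref{x}(b) asserts that $x$ cannot be written as $jy$ with $y\in\Lambda_\Delta\setminus\{{\bf 0},\pm x\}$; applied to $y=g$ this forces $g\in\{{\bf 0},\pm x\}$, and since $g$ generates a group of order $m\geq 3$ it is nonzero, so $g=\pm x$. Hence $\Lambda_\Delta=\langle x\rangle$ and $m=\sharp\Lambda_\Delta=m'$.

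Finally I would read off both conclusions. Because $m\geq 3$, the value $m'=2$ is excluded (it would give $m=2$), so $m=m'\in\{3,4,6\}$, and by Proposition~\ref{order} the dimension equals $3k-1$. The generator is $g=\pm x$, so substituting the order-$3$, order-$4$ and order-$6$ forms of $x$ from Proposition~\ref{order} and passing to inverses yields exactly the three claimed generators; for instance $-x=(1/3,\ldots,1/3)$ when $m=3$, and the phrase ``or its inverse'' in the statement accounts for the sign ambiguity $g=\pm x$. The only genuinely delicate point is the exclusion of an order-$2$ top element: under cyclicity such an $x$ would be a proper power of the generator and would violate Lemma~\ref{x}(b), which (equivalently, via $m=m'=2$) is exactly where the hypothesis $m\geq 3$ enters.
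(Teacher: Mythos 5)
Your proposal is correct and takes essentially the same route as the paper: cyclicity plus Lemma~\ref{x}~(b) forces the generator to be $\pm x$, and then Proposition~\ref{order} together with $m \geq 3$ yields $m \in \{3,4,6\}$ and the explicit forms (with ``or its inverse'' absorbing the sign ambiguity). The only difference is that you explicitly perform the lattice-pyramid reduction via Proposition~\ref{pyramid} before invoking Proposition~\ref{order} (whose hypothesis requires $\Delta$ not to be a lattice pyramid), a step the paper's own proof leaves implicit; this is a point of extra care rather than a different approach.
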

\begin{proof}
Let $x \in \Lambda_\Delta$ be the unique element with $\heit(x)=2k$. 
By Lemma \ref{x} (b), $x$ and its inverse must be a generator of $\Lambda_\Delta$. 
On the other hand, by Proposition \ref{order} and $m \ge 3$, $m$ is $3$ or $4$ or $6$. 
The form of $x$ follows directly from Proposition \ref{order}. 
\end{proof}

\subsection{Proof of Theorem \ref{main}}

Let $\Delta$ be an empty simplex whose $h^*$-polynomial equals $1+(m-2)t^k+t^{2k}$ for given integers $m \geq 3$ and $k \geq 2$. 

\smallskip

By Corollary \ref{ccc} we can assume that $\Lambda_\Delta$ is not cyclic. 
Namely, we assume that there is a group isomorphism 
$$\phi : \Lambda_\Delta \rightarrow \ZZ/m_1\ZZ \times \cdots \times \ZZ/m_\ell\ZZ,$$ 
where $\ell \geq 2$, $m_i \in \ZZ_{\geq 2}$ and $m_i$ divides $m_{i+1}$ for each $1 \leq i \leq \ell-1$. 

Let $x \in \Lambda_\Delta$ be the unique element with $\heit(x)=2k$. 
Then there is $x^{(i)} \in \ZZ/m_i\ZZ$ for each $1 \leq i \leq \ell$ 
such that $\phi(x)=(x^{(1)},\ldots,x^{(\ell)}) \in \ZZ/m_1\ZZ \times \cdots \times \ZZ/m_\ell\ZZ$. 
Let $S=\{i \in \{1,\ldots,\ell\} : x^{(i)} \not=0\}$. Then $S \not=\emptyset$. 

We will split the proof into two cases. 

\subsection{The case $\ell \geq 3$} 

First, we consider the case $\ell \geq 3$. 

Assume that $\sharp S > 1$. Then there are $q$ and $q'$ in $S$ such that $q \not= q'$. Let 
$$G=\phi^{-1}(\ZZ/m_1\ZZ \times \cdots \times \ZZ/m_{q-1}\ZZ \times \{0\} \times \ZZ/m_{q+1}\ZZ \times \cdots \times \ZZ/m_\ell\ZZ).$$ 
Then $G$ is a subgroup of $\Lambda_\Delta$ not containing $x$. 
Let $\Delta_G \subset \RR^d$ be a lattice simplex such that $\Lambda_{\Delta_G}=G$. 
Since we have $\heit(y)=k$ for each $y \in G \setminus \{{\bf 0}\}$, 
the $h^*$-polynomial of $\Delta_G$ equals $1+(\sharp G -1)t^k$. Moreover, since $\ell \geq 3$, $G$ is not cyclic. 
Although $\Delta_G$ might be a lattice pyramid, the structure of $\Delta_G$ (equivalently, $G$) 
is known by Theorem \ref{key1} or \ref{key2}. 
Since $G$ is not cyclic, $\Delta_G$ is the case of Theorem \ref{key2}. In particular, 
there are a prime number $p$ and a positive integer $r$ 
such that $G \cong (\ZZ/p\ZZ)^r$. Hence, $m_1=\cdots=m_{q-1}=m_{q+1}=\cdots=m_\ell=p$ and $r=\ell-1$. Similarly, let 
$$G'=\phi^{-1}(\ZZ/m_1\ZZ \times \cdots \times \ZZ/m_{q'-1}\ZZ \times \{0\} \times \ZZ/m_{q'+1}\ZZ \times \cdots \times \ZZ/m_\ell\ZZ).$$ 
Then the same discussion as above shows that there is a prime number $p'$ such that $m_1=\cdots=m_{q'-1}=m_{q'+1}=\cdots=m_\ell=p'$. 
Since $\ell \geq 3$ and $q \not= q'$, we conclude that $m_1=\cdots=m_\ell=p(=p')$, that is, $\Lambda_\Delta \cong (\ZZ/p\ZZ)^\ell$. 
Moreover, since the order of $x \in \Lambda_\Delta$ is 2 or 3 or 4 or 6 by Proposition \ref{order}, $p$ should be 2 or 3. Therefore, 
$$\Lambda_\Delta \cong (\ZZ/2\ZZ)^\ell \text{ or } \Lambda_\Delta \cong (\ZZ/3\ZZ)^\ell.$$
In each case, there is another isomorphism $\phi':\Lambda_\Delta \rightarrow (\ZZ/b\ZZ)^\ell$, 
where $b=2$ or $b=3$, such that $\phi'(x)=(0,\ldots,0,1) \in (\ZZ/b\ZZ)^\ell$. 

Hence, we can assume the case $\sharp S=1$. 
By Lemma \ref{x} (b) and $\sharp S=1$, $\phi(x)$ generates one direct factor of $\phi(\Lambda_\Delta)$ 
and so does $\phi(-x)$. For the remaining direct factors, the same discussions as above can be applied. 
Therefore, $\phi(\Lambda_\Delta)$ must be one of the following (non-cyclic) groups: 
\begin{itemize}
\item[(i)] $(\ZZ / 2\ZZ)^\ell$; 
\item[(ii)] $(\ZZ / 2\ZZ)^{\ell-1} \times \ZZ/4\ZZ$; 
\item[(iii)] $(\ZZ / 2\ZZ)^{\ell-1} \times \ZZ/6\ZZ$; 
\item[(iv)] $(\ZZ / 3\ZZ)^\ell$; 
\item[(v)] $(\ZZ / 3\ZZ)^{\ell-1} \times \ZZ/6\ZZ$. 
\end{itemize}
Here, we assume that $\phi(x)$ belongs to the last direct factor. 

By the discussions below, we verify the cases (i) and (iv) can happen 
but the cases (ii), (iii) and (v) never happen.

\bigskip

\noindent
The case (i): Let us consider the subgroup $G'=\phi^{-1}((\ZZ/2\ZZ)^{\ell-1} \times \{0\})$ of $\Lambda_\Delta$, where $x \not\in G'$. 
Then it follows that we have $\heit(y)=k$ for each $y \in G' \setminus \{{\bf 0}\}$. 
By Theorem \ref{key2}, we know the system of generator of $G'$ as follows: 
let $\Delta'$ be the lattice simplex of dimension $d' \leq d$ which is not a lattice pyramid 
such that $\Lambda_{\Delta'} = G'$ after taking $(d-d')$-repeated lattice pyramids. 
Then the system of generators of $\Lambda_{\Delta'}$ is the set of the row vectors of the matrix 
$$(B_{\ell-1}^{(2)},\ldots,B_{\ell-1}^{(2)}),$$ 
where $A_{\ell-1}^{(2)} \in \{0,1\}^{(\ell-1) \times (2^{\ell-1}-1)}$ 
is the generator matrix of the simplex code over $\FF_2$ of dimension $(\ell-1)$ with block length $(2^{\ell-1}-1)$ 
and $B_{\ell-1}^{(2)} \in \left\{0,\frac{1}{2}\right\}^{(\ell-1) \times (2^{\ell-1}-1)}$ 
is the matrix all of whose entries are divided by $2$ from those of $A_{\ell-1}^{(2)}$, 
and where $B_{\ell-1}^{(2)} \in \left\{0,\frac{1}{2}\right\}^{(\ell-1) \times 2^{\ell-1}}$ is repeated $k/2^{\ell-3}$ times. 

Let $a=k/2^{\ell-3}$. Then $k=2^{\ell-3}a$ and $a \geq 1$. By Theorem \ref{key2}, 
we know the relation $$2^{\ell-2}(d'+1)=2k(2^{\ell-1}-1)=2^{\ell-2}a(2^{\ell-1}-1).$$ 
Thus $d'+1=a(2^{\ell-1}-1)$.

On the other hand, since the order of $x$ is 2 in this case, 
we have $\sharp \supp(x)=4k=d+1$ by Proposition \ref{order}. 
Therefore, $$d+1-(d'+1)=2^{\ell-1}a-a(2^{\ell-1}-1)=a.$$ 
Consequently, in this case, we have $m=2^\ell$, $k=2^{\ell-3}a$ and $d=4k-1=2^{\ell-1}a-1$ 
with $a \geq 1$ and $\ell \geq 3$ and 
the system of generators of $\Lambda_\Delta$ is the set of row vectors of the matrix 
\begin{align*}
\begin{pmatrix}
&(B_{\ell-1}^{(2)},0) &(B_{\ell-1}^{(2)},0) &\cdots &(B_{\ell-1}^{(2)},0) \\
&1/2 &1/2 &\cdots &1/2
\end{pmatrix} \in [0,1)^{\ell \times 4k} 
\end{align*}
up to permutation of the columns. This is the case (b) of Theorem \ref{main}. 

\bigskip

\noindent
The cases (ii) and (iii): 
Let $G'$ be the same thing as the case (i) above. 

Since the order of $x$ is 4 or 6, we have $d+1=3k$ by Proposition \ref{order}. Take $y \in G' \setminus \{{\bf 0}\}$. 
Since the order of $y$ is 2, we have $y=(\underbrace{1/2,\ldots,1/2}_{2k},\underbrace{0,\ldots,0}_k) \in [0,1)^{3k}$ after reordering. 
By $\sharp \supp(x+y)=2k$, $\sharp \{i \in \supp(x) \cap \supp(y) : x_i=1/2\}$ should be $k$. 
Similarly, for $y' \in G' \setminus \{{\bf 0}\}$ with $y \not=y'$, one has $\sharp \{i \in \supp(x) \cap \supp(y') : x_i=1/2\}=k$. 
Recall that $\sharp\{i \in \supp(x) : x_i=1/2\}=k$ by Proposition \ref{order}. 
Thus, $\sharp \supp(x+y+y')=3k$, a contradiction.

\bigskip

\noindent
The case (iv): 
Let us consider the subgroup $G'=\phi^{-1}((\ZZ/3\ZZ)^{\ell-1} \times \{0\})$ of $\Lambda_\Delta$, where $x \not\in G'$. 
Let $\Delta'$ be a lattice simplex of dimension $d' \leq d$ which is not a lattice simplex such that 
$\Lambda_{\Delta'}=G'$ after taking $(d-d')$-repeated lattice pyramids. 
Then the system of generators of $\Lambda_{\Delta'}$ is the set of the row vectors of the matrix 
$$((B_{\ell-1}^{(3)},-B_{\ell-1}^{(3)}),\ldots,(B_{\ell-1}^{(3)},-B_{\ell-1}^{(3)})),$$ 
where $A_{\ell-1}^{(3)} \in \{0,1,2\}^{(\ell-1) \times (3^{\ell-1}-1)/2}$ 
is the generator matrix of the simplex code over $\FF_3$ of dimension $(\ell-1)$ with block length $(3^{\ell-1}-1)/2$ and 
$B_{\ell-1}^{(3)} \in \left\{0,\frac{1}{3},\frac{2}{3}\right\}^{(\ell-1) \times (3^{\ell-1}-1)/2}$ 
(resp. $-B_{\ell-1}^{(3)} \in \left\{0,\frac{2}{3},\frac{1}{3}\right\}^{(\ell-1) \times (3^{\ell-1}-1)/2}$) is the matrix all of whose entries are 
divided by $3$ from those of $A_{\ell-1}^{(3)}$ (resp. $-A_{\ell-1}^{(3)}$), 
and where in above matrix $(B_{\ell-1}^{(3)},-B_{\ell-1}^{(3)}) \in \left\{0,\frac{1}{3},\frac{2}{3}\right\}^{(\ell-1) \times (3^{\ell-1}-1)}$ 
is repeated $(k/3^{\ell-2})$ times. 

Let $a=k/3^{\ell-2}$. Then $k= 3^{\ell-2}a$ and $a \geq 1$. By Theorem \ref{key2}, we know the relation 
$$(3^{\ell-1}-3^{\ell-2})(d'+1)=2k(3^{\ell-1}-1)=2 \cdot 3^{\ell-2}a(3^{\ell-1}-1).$$ 
Thus $d'+1=a(3^{\ell-1}-1)$. 

Since the order of $x$ is 3, we have $\sharp \supp(x)=3k=d+1$ by Proposition \ref{order}. 
Therefore, $$d+1-(d'+1)=3^{\ell-1}a-a(3^{\ell-1}-1)=a.$$ 
Consequently, in this case, we have $m=3^\ell$, $k=3^{\ell-2}a$ and $d=3k-1=3^{\ell-1}a-1$ 
with $a \geq 1$ and $\ell \geq 3$ and 
the system of generators of $\Lambda_\Delta$ is the set of row vectors of the matrix 
\begin{align*}
\begin{pmatrix}
&(B_{\ell-1}^{(3)},-B_{\ell-1}^{(3)},0) &(B_{\ell-1}^{(3)},-B_{\ell-1}^{(3)},0) &\cdots &(B_{\ell-1}^{(3)},-B_{\ell-1}^{(3)}.0) \\
&1/3 &1/3 &\cdots &1/3
\end{pmatrix} \in [0,1)^{\ell \times 3k} 
\end{align*}
up to permutation of the columns. 
This is the case (c) of Theorem \ref{main} with $\ell \geq 3$. 

\bigskip

\noindent
The case (v): 
Let $G'$ be the same thing as the case (iv). 

Take $y \in G' \setminus \{{\bf 0}\}$. Then $y=(\underbrace{1/3,\ldots,1/3}_k,\underbrace{2/3,\ldots,2/3}_k,\underbrace{0,\ldots,0}_k)$ 
after reordering. Since $\sharp \supp(x+y)=2k$, $\sharp \{i \in \supp(x) \cap \supp(y) : x_i=2/3,y_i=1/3\}$ should be $k$. 
Thus, we have $\supp(x+2y)=3k$, a contradiction. 

\subsection{The case $\ell=2$}

Next, we consider the case $\ell=2$.

Let $G_1=\phi^{-1}(\ZZ/m_1\ZZ \times \{0\})$ and $G_2=\phi^{-1}(\{0\} \times \ZZ/m_2\ZZ)$. 
Clearly, either $G_1$ or $G_2$ does not contain $x$, say, $G_1$. 
Then we have $\heit(y)=k$ for each $y \in G_1 \setminus \{{\bf 0}\}$. 
By Theorem \ref{key1}, $G_1$ is generated by $(a_1/m_q,(m_q-a_1)/m_q,\ldots,a_k/m_q,(m_q-a_k)/m_q,0,\ldots,0) \in G_1$ 
after reordering, where $m_q=\sharp G_1$ and each $a_i$ is an integer with $0 < a_i \leq m_q/2$ which is coprime to $m_q$. 
Let $g=(a_1/m_q,(m_q-a_1)/m_q,\ldots,a_k/m_q,(m_q-a_k)/m_q,0,\ldots,0) \in [0,1)^{d+1}$. 

Let $\phi(x)=(x^{(1)},x^{(2)}) \in \ZZ/m_1\ZZ \times \ZZ/m_2\ZZ$, where $0 \leq x^{(i)} \leq m_i-1$ for $i=1,2$. 

\bigskip

\noindent
The case where the order of $x$ is $2$: 
Then $x=(1/2,\ldots,1/2) \in [0,1)^{4k}$ and $d+1=4k$ by Proposition \ref{order}. 
Since $\sharp \supp(x+g)=2k$, we obtain that $m_q=2$ and $a_i=1$ for each $i$. 

Assume that $x^{(1)}\not=0$ and $x^{(2)}\not=0$. Let $g_1$ and $g_2$ be the generators of $G_1$ and $G_2$, 
respectively, such that $x=x^{(1)}g_1+x^{(2)}g_2$. 
Since $\sharp \supp(x^{(1)}g_1)=\sharp \supp(x^{(2)}g_2)=2k$ and $x=(1/2,\ldots,1/2) \in [0,1)^{4k}$, 
$g_1$ and $g_2$ look like $(\underbrace{1/2,\ldots,1/2}_{2k},\underbrace{0,\ldots,0}_{2k})$ after reordering 
and we also have $x^{(1)}=x^{(2)}=1$. 
In particular, $\phi(\Lambda_\Delta) = (\ZZ/2\ZZ)^2$. 
Thus there is another isomorphism $\phi' : \Lambda_\Delta \rightarrow (\ZZ/2\ZZ)^2$ such that $\phi'(x)=(0,1) \in (\ZZ/2\ZZ)^2$. 
Hence we can deduce the case where $x^{(1)}=0$ or $x^{(2)}=0$.

Assume that $x^{(1)}=0$ or $x^{(2)}=0$. Then $x$ generates one direct factor of $\Lambda_\Delta$. 
Hence the system of generators of $\Lambda_\Delta$ is the set of row vectors of the matrix 
\begin{align*}
\left( \, 
\begin{array}{r@{}r@{}r r r}
\underbrace{
\begin{array}{rrr}
1/2 &\cdots &1/2 \\
1/2 &\cdots &1/2 
\end{array}
}_{2k} \; 
\underbrace{
\begin{array}{rrr}
0   &\cdots &0 \\
1/2 &\cdots &1/2 
\end{array}
}_{2k} 
\end{array}\, \right) \in [0,1)^{2 \times 4k}
\end{align*}
after reordering. This is the case (a) with $m=4$ and $d=4k-1$ of Theorem \ref{main}.

\bigskip

\noindent
The case where the order of $x$ is $3$: 
Then $x=(2/3,\ldots,2/3) \in [0,1)^{3k}$ by Proposition \ref{order}. 
Since $\sharp \supp(x+g)=2k$, we obtain $m_q=3$ and $a_i=1$ for each $i$.

Assume that $x^{(1)}\not=0$ and $x^{(2)}\not=0$. 
By the similar discussions to the above, we see that $\phi(\Lambda_\Delta) = (\ZZ/3\ZZ)^2$. 
Thus there is another isomorphism $\phi' : \Lambda_\Delta \rightarrow (\ZZ/3\ZZ)^2$ such that $\phi'(x)=(0,1) \in (\ZZ/3\ZZ)^2$. 
Hence we can deduce the case where $x^{(1)}=0$ or $x^{(2)}=0$.

Assume that $x^{(1)}=0$ or $x^{(2)}=0$. 
Then each of $x$ and $-x$ generates one direct factor of $\Lambda_\Delta$. 
Hence we obtain that the system of generators of $\Lambda_\Delta$ is the set of row vectors of the matrix 
\begin{align*}
\left( \, 
\begin{array}{r@{}r@{}r r r}
\underbrace{
\begin{array}{rrr}
1/3 &\cdots &1/3 \\
1/3 &\cdots &1/3 
\end{array}
}_k \;
\underbrace{
\begin{array}{rrr}
2/3 &\cdots &2/3 \\
1/3 &\cdots &1/3 
\end{array}
}_k \;
\underbrace{
\begin{array}{rrr}
0   &\cdots &0 \\
1/3 &\cdots &1/3 
\end{array}
}_k 
\end{array}\, \right) \in [0,1)^{2 \times 3k}. 
\end{align*}
This is the case (c) with $\ell=2$ of Theorem \ref{main}. 

\bigskip

\noindent
The case where the order of $x$ is $4$: 
Then $x=(\underbrace{3/4,\ldots,3/4}_{2k},\underbrace{1/2,\ldots,1/2}_k) \in [0,1)^{3k}$ by Proposition \ref{order}. 
Let $k_j=\sharp \{i \in \supp(x) \cap \supp(g) : x_i=3/4, g_i=j/4\}$ for $j=1,2,3$ and 
$k'=\sharp \{i \in \supp(x) \cap \supp(g) : x_i=g_i=1/2\}$. 
Since $\sharp \supp(x+g)=\sharp \supp(2x+g)=\sharp \supp(3x+g)=2k$, similar to the proof of Proposition \ref{order}, 
we obtain $k_1+k'=k_3+k'=k$ and $k_2 \geq k'$. Thus we have $k_1+k_2+k_3+k' \geq 2k$. 
On the other hand, since $\sharp \supp(g) = 2k$, we also have $k_1+k_2+k_3+k' \leq 2k$. 
Hence $k_1+k_2+k_3+k'=2k$. Moreover, since $\sharp \supp(2x+2g)=2k$, one has 
$\sharp \supp(2x+2g)= 2k-k_1-k_3=2k$. Thus $k_1=k_3=0$. Hence it follows from $k_1+k'=k_3+k'=k$ that $k_2=k'=k$. 
In particular, $g$ looks like $(\underbrace{1/2,\ldots,1/2}_{2k},\underbrace{0,\ldots,0}_k) \in [0,1)^{3k}$ after reordering 
and has order $2$. 

Assume that $x^{(1)}\not=0$ and $x^{(2)}\not=0$. 
each generator of $G_1$ and $G_2$ has order $2$, we obtain that $\Lambda_\Delta \cong (\ZZ/2\ZZ)^2$. 
However, $(\ZZ/2\ZZ)^2$ does not contain any element with order $4$, a contradiction. 

Hence $x^{(1)}=0$ or $x^{(2)}=0$. Then each of $x$ and $-x$ generates one direct factor of $\Lambda_\Delta$. 
Thus we see that the system of generators of $\Lambda_\Delta$ is the set of row vectors of the matrix 
\begin{align*}
\left( \, 
\begin{array}{r@{}r@{}r r r}
\underbrace{
\begin{array}{rrr}
1/2 &\cdots &1/2 \\
1/4 &\cdots &1/4 
\end{array}
}_k \; 
\underbrace{
\begin{array}{rrr}
0   &\cdots &0 \\
1/4 &\cdots &1/4 
\end{array}
}_k \; 
\underbrace{
\begin{array}{rrr}
1/2 &\cdots &1/2 \\
1/2 &\cdots &1/2 
\end{array}
}_k 
\end{array}\, \right) \in [0,1)^{2 \times 3k}. 
\end{align*}
This is the case (a) with $m=8$ of Theorem \ref{main}.

\bigskip

\noindent
The case where the order of $x$ is $6$: 
Then $x=(\underbrace{5/6,\ldots,5/6}_k,\underbrace{2/3,\ldots,2/3}_k,\underbrace{1/2,\ldots,1/2}_k) \in [0,1)^{3k}$ 
by Proposition \ref{order}. 
Let \begin{align*}
&k_j=\sharp \{i \in \supp(x) \cap \supp(g) : x_i=5/6, g_i=j/6\} \text{ for } j=1,2,3,4,5, \\
&k_j'=\sharp \{i \in \supp(x) \cap \supp(g) : x_i=2/3, g_i=j/3\} \text{ for } j=1,2, \\
&k''=\sharp \{i \in \supp(x) \cap \supp(g) : x_i=g_i=1/2\}. 
\end{align*}
Since $\sharp \supp(x+g)=\cdots=\sharp \supp(5x+g)=2k$, similar to the proof of Proposition \ref{order}, we see that 
$k_1+\cdots+k_5+k_1'+k_2'+k'' \geq 2k$. On the other hand, since $\sharp \supp(g)=2k$, we also have 
$k_1+\cdots+k_5+k_1'+k_2'+k'' \leq 2k$. Hence, $k_1+\cdots+k_5+k_1'+k_2'+k'' =2k$. 

Moreover, since $\sharp \supp(x+2g)=\sharp \supp(x+4g)=2k$, one also has 
$$2k=\sharp \supp(x+2g)=k+k-k_2'+k \text{ and }2k=\sharp \supp(x+4g)=k+k-k_1'+k.$$ 
Hence $k=k_1'=k_2'$. Then it follows that $2k=k_1'+k_2' \leq \sharp\{i \in \supp(x) : x_i=2/3\} = k$, a contradiction. 

Therefore, we conclude that the order of $x$ is never $6$ when $\Lambda_\Delta$ has exactly two direct factors. 
This finishes the proof of Theorem \ref{main}. 

{\small 
\bibliographystyle{plain}
\bibliography{symmetric_hvector-arxiv-version}
}
\end{document}